\theoremstyle{plain}
\newtheorem{thm}{Theorem}[section]
\newtheorem{lem}[thm]{Lemma}
\newtheorem{cor}[thm]{Corollary}
\newtheorem{prop}[thm]{Proposition}
\theoremstyle{definition}
\newtheorem{rem}[thm]{Remark}
\mathchardef\semic="303B
\newcommand{\R}{{\mathbb R}}
\newcommand{\N}{{\mathbb N}}
\newcommand{\C}{{\mathbb C}}
\newcommand{\Z}{{\mathbb Z}}
\newcommand{\mH}{{\mathcal H}}
\newcommand{\mX}{{\mathcal X}}
\newcommand{\mL}{{\mathcal L}}
\newcommand{\mA}{{\mathcal A}}
\newcommand{\mR}{{\mathcal R}}
\newcommand{\mS}{{\mathcal S}}
\newcommand{\mI}{{\mathcal I}}
\newcommand{\mB}{{\mathcal B}}
\DeclareMathOperator{\re}{Re}
\newcommand{\nul}{\textsf{N}}
\newcommand{\ran}{\textsf{R}}
\newcommand{\dom}{\textsf{D}}
\newcommand{\clos}[1]{\overline{#1}}
\newcommand{\closran}[1]{\overline{\ran(#1)}}
\newcommand{\barint}{\mbox{$ave \int$}}
\newcommand{\divv}{{\text{{\rm div}}}}
\newcommand{\abs}[1]{|#1|}
\newcommand{\Norm}[2]{\|#1\|_{#2}}
\def\barint_#1{\mathchoice
            {\mathop{\vrule width 6pt
height 3 pt depth -2.5pt
                    \kern -8.8pt
\intop \kern -4pt}\nolimits_{#1}}%
            {\mathop{\vrule width 5pt height
3 pt depth -2.6pt
                    \kern -6.5pt
\intop \kern -4pt}\nolimits_{#1}}%
            {\mathop{\vrule width 5pt height
3 pt depth -2.6pt
                    \kern -6pt
\intop \kern -4pt}\nolimits_{#1}}%
            {\mathop{\vrule width 5pt height
3 pt depth -2.6pt
          \kern -6pt \intop \kern -4pt}\nolimits_{#1}}}
          \def\bariint_#1{\mathchoice
            {\mathop{\vrule width 10pt
height 3 pt depth -2.5pt
                    \kern -12.8pt
\intop \kern -10pt\intop \kern -4pt}\nolimits_{#1}}%
            {\mathop{\vrule width 9pt height
3 pt depth -2.6pt
                    \kern -10.5pt
\intop \kern -10pt\intop \kern -4pt}\nolimits_{#1}}%
            {\mathop{\vrule width 9pt height
3 pt depth -2.6pt
                    \kern -10pt
\intop \kern -10pt\intop \kern -4pt}\nolimits_{#1}}%
            {\mathop{\vrule width 9pt height
3 pt depth -2.6pt
          \kern -10pt \intop \kern -10pt\intop \kern -4pt}
      \nolimits_{  #1}}}
\definecolor{gr}{rgb}   {0.,   0.8,   0. } 
\definecolor{bl}{rgb}   {0.,   0.5,   1. } 
\definecolor{mg}{rgb}   {0.7,  0.,    0.7}
\title{Remarks on functional calculus for perturbed first order Dirac operators}
\author{Pascal Auscher}
\address{Univ. Paris-Sud, laboratoire de Math\'ematiques, UMR 8628 du CNRS, F-91405 {\sc Orsay}} 
\email{pascal.auscher@math.u-psud.fr}
\author{Sebastian Stahlhut}
\address{Univ. Paris-Sud, laboratoire de Math\'ematiques, UMR 8628 du CNRS, F-91405 {\sc Orsay}} 
\email{sebastian.stahlhut@math.u-psud.fr}
\date{\today}
\subjclass[2010]{47A60 (Primary); 42B37, 47F05 (Secondary)}
\keywords{Differential operators with bounded measurable coefficients, extrapolation of norm inequalities, $R$-bisectorial operators, coercivity conditions,    kernel/range decomposition}
\begin{document}

\maketitle

\begin{abstract}
We make some remarks on earlier works on  $R-$bisectoriality in $L^p$ of perturbed first order differential operators by Hyt\"onen, McIntosh and Portal.  They have shown  that 
this is equivalent to bounded holomorphic functional calculus in $L^p$ for $p$ in any open interval when suitable hypotheses are made. Hyt\"onen and McIntosh then showed that $R$-bisectoriality  in $L^p$ at one value of $p$ can be extrapolated in a neighborhood of $p$. We give a different proof of this extrapolation and observe that the first proof has impact on  the splitting of the space by the kernel and range. 
\end{abstract}

\section{Introduction}

Recall that an unbounded operator $\mathcal{A}$ on a Banach space $\mX$ is called \emph{bisectorial} of angle $\omega\in[0,\pi/2)$ if  it is closed, its spectrum is contained in the closure of $S_{\omega}:=\{z\in\C;\abs{\arg(\pm z)}<\omega\},$ and one has the resolvent estimate
%\begin{equation*}
%  \sigma(\mathcal{A})\subseteq \overline{S_{\omega}},\qquad
%  S_{\omega}:=\{z\in\C;\abs{\arg(\pmz)}<\omega\},
%\end{equation*}
%with
\begin{equation*}
  \Norm{(I+\lambda\mathcal{A})^{-1}}{\mL(\mX)}\leq C_{\omega'}\qquad\forall\ \lambda\notin S_{\omega'},\quad\forall\ \omega'>\omega.
\end{equation*}
Assuming reflexivity of $\mX$,  this implies that the domain is dense and also the  fact that the null space and the closure of the range split. More precisely, we say that the operator $\mA$  \emph{kernel/range decomposes} if $\mX=\nul(\mA) \oplus \closran \mA$ ($\oplus$ means that the sum is topological).  Here $\nul(\mA)$ denotes the kernel or null space and $\ran(\mA)$ its range, while the domain is denoted by $\dom(\mA)$. Bisectoriality in a reflexive space is stable under taking adjoints. 

For any bisectorial operator, one can define a calculus of bounded operators by  the Cauchy integral formula,
\begin{equation*}
\begin{split}
  \psi(\mathcal{A}) &:=\frac{1}{2\pi i}\int_{\partial S_{\omega'}}\psi(\lambda)(I-\frac{1}{\lambda}\mathcal{A})^{-1}\frac{d\lambda}{\lambda}, \\
  \psi &\in \Psi(S_{ \omega''}):=\{\phi\in H^{\infty}(S_{\omega''}):\phi\in O\big(\inf({\abs{z},\abs{z^{-1}}})^{\alpha}\big),\alpha>0\},
\end{split}
\end{equation*}
with $\omega''>\omega'>\omega$. If this calculus may be boundedly extended to all $\psi\in H^{\infty}(S_{\omega''})$, the space of bounded holomorphic functions in  $S_{\omega''}$  for all $\omega''>\omega$, then $\mathcal{A}$ is said to have an \emph{$H^{\infty}$-calculus} of angle $\omega$.

Assume $\mX=L^q$ of some $\sigma$-finite measure space. A closed operator $\mathcal{A}$ is called \emph{$R$-bisectorial} of angle $\omega$ if its spectrum is contained in $S_{w}$ and for all $\omega'>\omega$, there exists a constant $C>0$ such that 
$$
\left\|\left(\sum_{j=1}^k|(I+\lambda_{j} \mA)^{-1}u_{j}|^2\right)^{1/2}\right\|_{q} \le C
\left\|\left(\sum_{j=1}^k|u_{j}|^2\right)^{1/2}\right\|_{q}
$$
for all $k\in \N$, $\lambda_{1}, \ldots, \lambda_{k}\notin S_{\omega'}$ and $u_{1}, \ldots, u_{k}\in L^q$.  This is the so called $R$-boundedness criterion applied to the resolvent family. Note that the definition implies that $\mA$ is bisectorial.  This notion can be defined on any Banach space but we do need this here. 

In \cite{HMP1} and \cite{HMP2}, the equivalence between bounded $H^\infty$-calculus and $R$-bisectoriality is studied for some perturbed first order Hodge-Dirac and Dirac type  bisectorial operators in $L^p$ spaces (earlier work on such operators appear  in \cite{Ajiev}). It is known that the former implies  the latter in subspaces of $L^p$ \cite[Theorem 5.3]{KW}. But the converse  is not known. In this specific case, the converse was obtained  but for $p$ in a given open interval, not just  one value of $p$. Subsequently, in \cite{HM}, the $R$-bisectoriality on $L^p$ for these first order  operators was shown to be stable under perturbation of $p$, allowing to apply the above mentioned results and complete the study.  The proof of this result in \cite{HM} uses an extrapolation ``\`a la'' Calder\'on and Zygmund, by real methods. Here, we wish to observe that there is an extrapolation ``\`a la''  {\v{S}}ne{\u\i}berg using complex function theory. Nevertheless, the argument in \cite{HM} is useful to obtain further characterization of $R$-bisectoriality in $L^p$ in terms of    kernel/range decomposition. Indeed, we shall see that for the first order operators in $L^p$ considered in \cite{HM}, this property remains true  by perturbation of $p$ in the same interval as for perturbation of $R$-bisectoriality.  

Our plan is to first review properties of perturbed Dirac type operators at some abstract level of generality. Then we consider the  first order   differential operators of \cite{HMP2, HM}. We next show the  {\v{S}}ne{\u\i}berg extrapolation for ($R$-)bisectoriality of such operators and conclude for the equivalence of $R$-bisectoriality and $H^\infty$-calculus. We then show  that   of $H^\infty$-calculus, $R$-bisectoriality, bisectoriality hold simultaneously to    kernel/range decomposition on a certain open interval. We interpret this with the motivating example coming from a second order differential operator in divergence form, showing that this interval  agrees with an interval studied in \cite{Auscher}. 

\section{Abstract results}

In this section, we assume without mention the followings:  $\mX$ is a reflexive  complex Banach space. The duality between $\mX$ and its dual $\mX^*$  is denoted  $\langle u^*,u\rangle$  and is anti-linear in $u^*$ and linear in $u$.   Next, $D$ is a closed, densely defined operator on $\mX$ and $B$ is a bounded operator on $\mX$.  We state a first proposition on properties of $BD,DB$ and their duals under various hypotheses.

\begin{prop}\label{prop:0}
\begin{enumerate}
  \item $BD$ with $\dom(BD)=\dom(D)$ is densely defined. Its adjoint, $(BD)^*$, is closed, and
  $\dom((BD)^*)=\{u \in \mX\, ; \, B^*u \in \dom(D^*)\}=\dom(D^*B^*)$ with $(BD)^*=D^*B^*$.
  \item Assume that $\|Bu\| \gtrsim \|u\|$ for all $u\in \closran D$. Then,
\subitem (i)   $B|_{\closran D}\colon \closran D \to \closran{BD}$ is an isomorphism, 
\subitem (ii) $BD$ and $D^*B^*$ are both densely defined and closed.  
 \subitem (iii) $DB|_{\closran {D}}$ and $BD|_{\closran {BD}}$ are similar under conjugation  by $B|_{\closran {D}}$.
  \item Assume  that $\|Bu\| \gtrsim \|u\|$ for all $u\in \closran D$ and $\mX= \nul(D) \oplus \closran{D}$. Then $\nul (D)=\nul (BD)$. 
  \item  Assume that $\|Bu\| \gtrsim \|u\|$ for all $u\in \closran D$  and $\mX= \nul(D) \oplus \closran{BD}$. Then, 
  \subitem (i) $\mX= \nul(DB) \oplus \closran D$. 
   \subitem (ii)  $\ran(DB)=\ran(D)$.
  \item  Assume that $\|Bu\| \gtrsim \|u\|$ for all $u\in \closran D$  and $\mX= \nul(D) \oplus \closran{BD}$. Then, 
  \subitem (i) $\mX'= \nul(D^*B^*) \oplus \closran{D^*}$.
  \subitem (ii) $ \big(\,\closran{BD}\,\big)^* = \closran{D^*}$ in the duality $\langle\ , \  \rangle$,
   with comparable norms.
  \subitem  (iii) $\|B^*u^*\| \gtrsim \|u^*\|$ for all $u^*\in \closran {D^*}$, hence $B^*|_{\closran {D^*}}\colon \closran {D^*} \to \closran{B^*D^*}$ is an isomorphism.
  \subitem (iv) $(DB)^*=B^*D^*$.
  \subitem  (v) $D^*B^*|_{\closran {D^*}}$ and $B^*D^*|_{\closran {B^*D^*}}$ are similar under conjugation  by $B^*|_{\closran {D^*}}$.
  \subitem (vi) $ \closran{B^*D^*} = \big(\, \closran{D}\, \big)^*$ in the duality $\langle\ , \  \rangle$,
   with comparable norms.
 \subitem  (vii) $D^*B^*|_{\closran {D^*}}$ is the adjoint of $BD|_{\closran {BD}}$ in the duality $\langle\ , \  \rangle$.
  \subitem (viii) $B^*D^*|_{\closran {B^*D^*}}$ is the adjoint to $DB|_{\closran {D}}$ in the duality $\langle\ , \  \rangle$. 
  
  \end{enumerate}
\end{prop}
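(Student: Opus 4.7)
The plan is to march through (1)--(5), using only elementary adjoint theory and the closed range theorem through the lower bound, then deducing the dual statements in (5) by reflexivity and by reapplying earlier items to the pair $(D^*,B^*)$.

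For (1), $u\in\dom((BD)^*)$ iff $v\mapsto\langle u,BDv\rangle=\langle B^*u,Dv\rangle$ is bounded on $\dom(D)$, i.e.\ iff $B^*u\in\dom(D^*)$. For (2), the lower bound $\|Bu\|\gtrsim\|u\|$ on $\closran D$ says $B|_{\closran D}$ is injective with closed image; that image is $\closran{BD}$, since it contains $\ran(BD)$ and is closed, giving (i). Closedness of $BD$ follows from factoring $BD$ through the isomorphism in (i) and using closedness of $D$; density of $\dom(BD)=\dom(D)$ is automatic; closedness of $D^*B^*=(BD)^*$ comes from (1), and its density follows from reflexivity of $\mX$. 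The similarity in (iii) is a direct computation with the isomorphism of (i). For (3), $BDu=0$ forces $Du\in\nul(B)\cap\closran D=\{0\}$, so $\nul(BD)\subseteq\nul(D)$; the reverse inclusion is trivial.

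The core construction is (4)(i). Given $u\in\mX$, I apply the hypothesis $\mX=\nul(D)\oplus\closran{BD}$ to $Bu$ (not to $u$), writing $Bu=a_0+a_1$, and use (2)(i) to lift $a_1=Bz$ with $z\in\closran D$. Setting $u_1:=z$ and $u_0:=u-z$, one has $u_1\in\closran D$ and $Bu_0=a_0\in\nul(D)$, so $u_0\in\nul(DB)$; uniqueness and boundedness of the projections follow from (2)(i) and the boundedness of the projection for the given splitting. For (4)(ii), decompose $u\in\dom(D)$ as $u=n+r$ with $n\in\nul(D)$, $r\in\closran{BD}$; then $r=u-n\in\dom(D)\cap\closran{BD}$, so $r=Bz$ with $z\in\closran D$, and $Bz\in\dom(D)$ forces $z\in\dom(DB)$, giving $Du=Dr=DBz$.

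Part (5) is obtained by dualizing. (i) follows by annihilating both summands in the hypothesis, using $\nul(D)^\perp=\closran{D^*}$ and $(\closran{BD})^\perp=\nul(D^*B^*)$ (via (1)). (ii) comes from the restriction map $\closran{D^*}\to(\closran{BD})^*$, $u^*\mapsto u^*|_{\closran{BD}}$: it is injective because its kernel lies in $\closran{D^*}\cap(\closran{BD})^\perp=\{0\}$ by (i), and surjective by Hahn-Banach extension combined with (i). For (iii), the computation
\begin{equation*}
\|u^*\|\approx\sup_{v\in\closran{BD},\,\|v\|\leq1}|\langle u^*,v\rangle|=\sup_{w\in\closran D,\,\|Bw\|\leq1}|\langle B^*u^*,w\rangle|\lesssim\|B^*u^*\|
\end{equation*}
uses (ii) and the estimate $\|w\|\lesssim\|Bw\|$ from (2)(i). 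Items (iv) and (v) are (1) and (2)(iii) applied to $(D^*,B^*)$, using reflexivity and (iii); (vi) is (5)(ii) applied to $(D^*,B^*)$, whose required hypothesis $\mX^*=\nul(D^*)\oplus\closran{B^*D^*}$ is obtained by dualizing (4)(i). Finally, (vii) and (viii) state that the subspace restrictions of $D^*B^*$ and $B^*D^*$ act as adjoints in the dualities from (ii) and (vi); the basic pairing identity $\langle D^*B^*u^*,u\rangle=\langle u^*,BDu\rangle$ comes from (1), and the necessary range inclusions are given by (4)(ii) (applied to $(D,B)$ and $(D^*,B^*)$).

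The main obstacle is (4)(i): the desired decomposition is not symmetric to the hypothesis --- one must apply the splitting to $Bu$ rather than to $u$, then lift back through $B$ via (2)(i) --- and boundedness of the new projections requires combining the bounded projection for $\nul(D)\oplus\closran{BD}$ with the open mapping estimate from (2)(i). Once (4) is in hand, part (5) is largely formal dualization, with (5)(iii) being the only genuine quantitative step.
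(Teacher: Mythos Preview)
Your proof is correct and follows essentially the same route as the paper: the same key construction in (4)(i) (apply the given splitting to $Bu$ rather than $u$, then lift the $\closran{BD}$-component back through $B|_{\closran D}$), and the same dualization strategy in (5) via annihilators and Hahn--Banach. One small improvement worth noting: your argument for (3) is cleaner than the paper's --- you use directly that $Du\in\closran D$ and that $B$ is injective there, so $BDu=0$ forces $Du=0$, whereas the paper decomposes $u$ via $\mX=\nul(D)\oplus\closran D$; your version shows that this splitting hypothesis is in fact not needed for (3).
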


\begin{proof} We skip the  elementary proofs of  (1) and (2) except for (2iii). See the proof of \cite[Lemma 4.1]{AKM} where this is explicitly stated on a Hilbert space.  The reflexivity of $\mX$ is used to deduce that $D^*B^*=(BD)^*$ is densely defined. We next show (2iii). Note that $\closran {D}$ is an invariant subspace for $DB$. Let $\beta= B|_{\closran {D}}$. If $u\in \dom(BD|_{\closran {BD}})=  \closran{BD}\cap \dom(BD)=\closran{BD}\cap \dom(D)$, then $\beta^{-1}u\in \closran{D}\cap \dom(D\beta)=\closran{D}\cap \dom(DB)=\dom (DB|_{\closran {D}})$ and 
$$
BD u= \beta Du= \beta (DB)(\beta^{-1}u).
$$

We now prove (3). Clearly $\nul(D)\subset \nul(BD)$. Conversely, let $u\in \nul(BD)$. From $\mX= \nul(D) \oplus \closran{D}$ write $u=v+w$ with $v\in \nul(D)$ and $w\in \closran{D}$.  It follows that $Du=Dw$ and $0=BDu=BDw$. As $B|_{\closran D}\colon \closran D \to \closran{BD}$ is an isomorphism, we have $w=0$. Hence, $u=v\in \nul(D)$. 

We next prove (4). We know  that $DB$ is closed.  Its null space is $\nul(DB)=\{u\in \mX\, ;\, Bu \in \nul(D)\}$. 

Let us first show (i), namely that $\mX= \nul(DB) \oplus \closran D .$ As $\mX= \nul(D) \oplus \closran{BD}$ by assumption,   the projection $P_{1}$ on $\closran{BD}$ along $\nul(D)$ is bounded on $\mX$. Let $u\in \mX$.   
As $P_{1}Bu\in \closran{BD}$, there exists $v\in  \closran D$ such that $P_{1}Bu=Bv$ and $\|v\| \lesssim \|Bv\|= \|P_{1}Bu\|\lesssim \|u\|$.  Since $Bu= (I-P_{1})Bu+ P_{1}Bu$ and 
 $(I-P_{1})Bu\in \nul(D)$, we have $B(u-v)\in \nul(D)$, that is $u-v\in \nul(DB)$. It follows  that $u= u-v+v \in \nul(DB) + \closran D $ with $\|v\|+\|u-v\| \lesssim \|u\|$. 
 
Next, we see that $\ran(DB)=\ran(D)$. Indeed,  the inclusion $\ran(DB)\subseteq\ran(D)$ is trivial. For the other direction, if $v\in \ran(D)$, then one can find $u\in \dom(D)$ such that $v=Du$. Using 
$\mX= \nul(D) \oplus \closran {BD}$, one can select $u\in \closran {BD}= B \closran D$ and write $u= Bw$ with $w\in \closran D$. Hence $v= DBw \in \ran(DB)$.

We turn to the proof of  (5). Item  (i) is  proved as Lemma 6.2 in \cite{HMP2}. To see (ii), we observe that if $u^*\in \closran{D^*}$, then $\closran{BD} \ni u\mapsto \langle u^*, u\rangle$ is a continuous linear functional. Conversely, if $\ell\in \big(\,\closran{BD}\, \big)^*$, then by the Hahn-Banach theorem, there is $u^*\in \mX^*$ such that $\ell(u)= \langle u^*, u\rangle$ for all $u\in \closran{BD}$. Write 
$u^*=v^*+w^*$ with $v^*\in \nul(D^*B^*)$ and $w^*\in \closran{D^*}$ by (i). Since $\langle v^*, u\rangle=0$ for all $u\in \closran{BD}$, we have $\ell(u)= \langle w^*, u\rangle$ for all $u\in \closran{BD}$ with $w^*\in \closran{D^*}$. 

To see (iii), consider again $\beta=B|_{\closran {D}}$. Let $u^*\in \closran{D^*}, u\in  \closran{D}$. Then
$$
\langle B^*u^*, u\rangle = \langle u^*, Bu\rangle = \langle u^*, \beta u\rangle.
$$
Using (ii), we  have proved $B^*|_{\closran {D^*}}= \beta^*$ and the conclusion follows.

To see item (iv),  we remark that combining (iii) and item (2) applied to $B^*D^*$, we have $(B^*D^*)^*=DB$, hence $(DB)^*=B^*D^*$ by reflexivity. 

Item  (v) follows from item (iii) as for item (2iii). 

Item (vi) follows from $\closran D= \beta^{-1} \closran{BD}$ with $\beta$ as above:  $\big(\,\closran D\,\big)^*= \beta^{^*} \big(\,\closran{BD}\,\big)^*$ and we conclude using item (ii) and $B^*|_{\closran {D^*}}= \beta^*$.

Item (vii) follows from the dualities $(BD)^*=D^*B^*$ and   $ \big(\,\closran{BD}\,\big)^* = \closran{D^*}$.

To prove item (viii), we recall that  $DB|_{\closran {D}}= \beta^{-1} BD|_{\closran {BD}}\beta$. Thus using what precedes, 
$$\big(\, DB|_{\closran {D}}\, \big)^* = \beta^* \big(\,BD|_{\closran {BD}}\, \big)^* (\beta^*)^{-1} = B^*(D^*B^*|_{\closran {D^*}})(\beta^*)^{-1}= B^*D^*|_{\closran {B^*D^*}}.$$
\end{proof} 

%\begin{prop}\label{prop:1}
%\end{prop}
%
%\begin{proof} 
%\end{proof}

\begin{rem} Note that the property $\|Bu\| \gtrsim \|u\|$ for all $u\in \closran D$ alone does not seem to imply $\|B^*u^*\| \gtrsim \|u^*\|$ for all $u^*\in \closran {D^*}$. Hence the situation for $BD$ and $B^*D^*$ is not completely symmetric without further hypotheses.
\end{rem}

Here is an easy way to check the assumptions  above from    kernel/range decomposes assumptions.

\begin{cor} \label{cor:2} Assume  that $\|Bu\| \gtrsim \|u\|$ for all $u\in \closran D$. If $D$ and $BD$     kernel/range decompose, then $\mX= \nul(D) \oplus \closran{BD}$.  In particular this holds if $D$ and $BD$ are bisectorial. 
\end{cor}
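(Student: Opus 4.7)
The plan is to reduce the desired splitting $\mX = \nul(D) \oplus \closran{BD}$ to the kernel/range decomposition of $BD$ itself, by replacing $\nul(BD)$ with $\nul(D)$. So the core of the argument is an equality of kernels, which is exactly the content of Proposition~\ref{prop:0}(3).

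First I would verify that the hypotheses of Proposition~\ref{prop:0}(3) are met: we are assuming $\|Bu\| \gtrsim \|u\|$ for $u\in\closran D$, and the kernel/range decomposition of $D$ gives $\mX = \nul(D) \oplus \closran{D}$. Applying that item then yields $\nul(D) = \nul(BD)$. Next, using the kernel/range decomposition of $BD$ we have $\mX = \nul(BD) \oplus \closran{BD}$, and substituting the previous equality gives $\mX = \nul(D) \oplus \closran{BD}$, as desired.

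For the second assertion, I would invoke the observation in the introduction that a bisectorial operator on a reflexive Banach space automatically kernel/range decomposes (this is part of the discussion immediately following the definition of bisectoriality). Thus bisectoriality of both $D$ and $BD$ supplies the two decompositions needed for the first part, and the conclusion follows.

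There is no real obstacle here: the proof is purely a substitution once item (3) of Proposition~\ref{prop:0} is in hand. The only thing one must be careful about is that item (3) genuinely requires the $D$-decomposition (not the $BD$-decomposition) together with the lower bound on $B$; since both are available by hypothesis, the corollary is essentially immediate.
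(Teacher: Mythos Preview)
Your proposal is correct and matches the paper's own proof essentially line for line: the paper also invokes Proposition~\ref{prop:0}(3) to obtain $\nul(D)=\nul(BD)$ and then substitutes into the kernel/range decomposition $\mX=\nul(BD)\oplus\closran{BD}$. The second assertion is handled identically, via the fact recalled in the introduction that bisectoriality implies kernel/range decomposition.
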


\begin{proof} By Proposition 
\ref{prop:0}, (3), $\nul(D)=\nul(BD)$. We conclude from $\mX= \nul(BD) \oplus \closran{BD}$. 
 \end{proof}

%\begin{prop} \label{prop:2'} Assume  that $\|Bu\| \gtrsim \|u\|$ for all $u\in \closran D$ and  that $\mX=  \nul(D) \oplus \closran{BD}$.  Then $\mX= \nul(DB) \oplus \closran D$ and $\ran(DB)=\ran(D)$.
%\end{prop}
%
%\begin{proof}
%We know that $DB$ is closed.  Its null space is $\nul(DB)=\{u\in \mX\, ;\, Bu \in \nul(D)\}$. 
%
%Let us first show that $\mX= \nul(DB) \oplus \closran D .$ As $\mX= \nul(D) \oplus \closran{BD}$ by assumption,   the projection $P_{1}$ on $\closran{BD}$ along $\nul(D)$ is bounded on $\mX$. Let $u\in \mX$.   
%As $P_{1}Bu\in \closran{BD}$, there exists $v\in  \closran D$ such that $P_{1}Bu=Bv$ and $\|v\| \lesssim \|Bv\|= \|P_{1}Bu\|\lesssim \|u\|$.  Since $Bu= (1-P_{1})Bu+ P_{1}Bu$ and 
% $(1-P_{1})Bu\in \nul(D)$, we have $B(u-v)\in \nul(D)$, that is $u-v\in \nul(DB)$. It follows  that $u= u-v+v \in \nul(DB) + \closran D $ with $\|v\|+\|u-v\| \lesssim \|u\|$. 
% 
%Next, we see that $\ran(DB)=\ran(D)$. Indeed,  the inclusion $\ran(DB)\subseteq\ran(D)$ is trivial. For the other direction, if $v\in \ran(D)$, then one can find $u\in \dom(D)$ such that $v=Du$. Using 
%$\mX= \nul(D) \oplus \closran {BD}$, one can select $u\in \closran {BD}= B \closran D$ and write $u= Bw$ with $w\in \closran D$. Hence $v= DBw \in \ran(DB)$.  
%\end{proof}

\begin{cor}\label{cor:3} Assume  that $\|Bu\| \gtrsim \|u\|$ for all $u\in \closran D$ and  that $D$     kernel/range decomposes.  If  $BD$     kernel/range decomposes so does $DB$. If $BD$ is bisectorial, so is $DB$, with same angle as $BD$. The same holds if $R$-bisectorial replaces bisectorial everywhere. 
\end{cor}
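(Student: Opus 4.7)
The plan is to bootstrap from the results already established in Proposition~\ref{prop:0} and Corollary~\ref{cor:2}, exploiting the fact that $DB|_{\closran D}$ and $BD|_{\closran{BD}}$ are similar under conjugation by $\beta := B|_{\closran D}$.

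First I would handle the kernel/range decomposition assertion. Since $D$ and $BD$ both kernel/range decompose and $\|Bu\|\gtrsim \|u\|$ on $\closran D$, Corollary~\ref{cor:2} gives the splitting $\mX = \nul(D) \oplus \closran{BD}$. Then Proposition~\ref{prop:0}(4i) immediately yields $\mX = \nul(DB) \oplus \closran D$, which is exactly the kernel/range decomposition for $DB$.

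Next I would transfer bisectoriality. By Proposition~\ref{prop:0}(2iii), on the invariant subspace $\closran D$ one has the similarity
\begin{equation*}
  DB|_{\closran D} = \beta^{-1}\, BD|_{\closran{BD}}\, \beta,
\end{equation*}
with $\beta\colon \closran D \to \closran{BD}$ an isomorphism (Proposition~\ref{prop:0}(2i)). Since $BD$ is bisectorial on $\mX$ with angle $\omega$, and $\nul(BD)$ is an invariant subspace on which $BD$ vanishes, the part of $BD$ on $\closran{BD}$ is bisectorial on that subspace with the same angle. Conjugation by the bounded invertible operator $\beta$ preserves the spectrum and yields the resolvent identity $(I+\lambda DB|_{\closran D})^{-1} = \beta^{-1} (I+\lambda BD|_{\closran{BD}})^{-1} \beta$, so $DB|_{\closran D}$ is bisectorial on $\closran D$ of angle $\omega$. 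Together with the trivial action of $DB$ on $\nul(DB)$ and the topological direct sum $\mX = \nul(DB) \oplus \closran D$ just established, one concludes that $DB$ is bisectorial on $\mX$ of angle $\omega$.

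For the $R$-bisectorial case, the same argument applies verbatim, because $R$-boundedness of a family of operators is preserved under conjugation by a bounded invertible operator (one estimates the relevant square functions by absorbing the norms of $\beta$ and $\beta^{-1}$) and is also preserved when passing between an operator on $\mX$ and its part on a complemented invariant subspace. The main (and really the only) subtlety is to check that the bisectoriality hypothesis on $BD$ on the full space $\mX$ passes to its part on $\closran{BD}$ with the same angle, which requires the kernel/range decomposition for $BD$ (automatic from bisectoriality plus reflexivity of $\mX$, as recalled in the introduction) so that $\closran{BD}$ is truly a complemented invariant subspace where the resolvent estimates of $BD$ restrict cleanly.
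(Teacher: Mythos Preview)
Your proof is correct and follows essentially the same route as the paper: invoke Corollary~\ref{cor:2} and Proposition~\ref{prop:0}(4) for the kernel/range decomposition of $DB$, then use the similarity $DB|_{\closran D}=\beta^{-1}BD|_{\closran{BD}}\beta$ from Proposition~\ref{prop:0}(2iii) together with the trivial action on $\nul(DB)$ and the splitting $\mX=\nul(DB)\oplus\closran D$ to transfer ($R$-)bisectoriality. You supply a bit more detail than the paper (the explicit resolvent identity, the remark on restricting to the complemented subspace), but the argument is the same.
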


\begin{proof}   The statement about    kernel/range decomposition is a consequence of Corollary \ref{cor:2} and Proposition \ref{prop:0}, item (4). Assume next that $BD$ is bisectorial and let us show that $DB$ is bisectorial. By Proposition \ref{prop:0}, item (2), $DB|_{\closran {D}}$ and $BD|_{\closran {BD}}$ are similar, thus $DB|_{\closran {D}}$ is bisectorial. Trivially $DB|_{\nul (DB)}=0$ is also bisectorial. 
As $\mX= \nul(DB) \oplus \closran D$ by 
  Corollary \ref{cor:2} and Proposition \ref{prop:0}, item (4), we conclude that $DB$ is bisectorial in $\mX$. 
%  it remains to show that  
%Let $\omega \in [0,\pi/2)$ be the bisectorial angle of $BD$. Let $\omega<\mu<\pi/2$ and $\lambda\notin S_{\mu}$.  Remark that on $\closran D\cap \dom(DB)$ we have $(DB)u=\beta^{-1}(BD)\beta u$, with $\beta=B|_{\closran D}$.  For $v\in \closran D$, set $u=(I+\lambda BD)^{-1} (Bv)$. Then $u\in \dom(BD)$ and $Bv=u+\lambda B D u$. Thus $u\in \closran{BD}$: there exists $\tilde u \in \closran D$ such that $u= B\tilde u= \beta \tilde u$.  Then
%$$
%(I+\lambda DB) \tilde u= \tilde u + \lambda D u= \beta^{-1} (u + \lambda BD u)= \beta^{-1} Bv = v
%$$
%since $v\in \closran D$. Moreover, 
%$$
%\|\tilde u \| \lesssim \|u\| \lesssim \|(I+\lambda BD)^{-1} \| \|Bv\| \lesssim  \|(I+\lambda BD)^{-1} \| \|v\|.
%$$
%For $v'\in \nul (DB)$,  $DBv'=0$, hence we have $(I+\lambda DB) v'=v'$. 
%Using $\mX= \nul(DB) \oplus \closran D$, this shows that $I+\lambda DB$ is invertible with 
%$\|(I+\lambda DB)^{-1}\| \lesssim \| (I+\lambda BD)^{-1}\|$. Bisectoriality is proved. 

%Remark that $BD$ bisectorial implies $D^*B^*=(BD)^*$ bisectorial is a general fact. If $\mX$ is reflexive then $B^*D^*= (DB)^*$ since $DB=(B^*D^*)^*$, so the bisectoriality of $B^*D^*$ follows from that of $DB$. 

The proof for $R$-bisectoriality is similar. 
\end{proof}

%\begin{rem} Note that the equality $\ran(DB)=\ran(D)$ depends only on $\|Bu\| \gtrsim \|u\|$ for all $u\in \closran D$ and $\mX= \nul(D) \oplus \closran{BD}$, that is the assumptions in (2) and (3) of Proposition \ref{prop:0}. 
%\end{rem}

\begin{rem} The converse   $DB$ ($R$-)bisectorial   implies $BD$  ($R$-)bisectorial seems unclear under the above assumptions on $B$ and $D$, even if $\mX$ is reflexive which we assumed. So it appears that the theory is not completely symmetric  for $BD$ and for $DB$ under such assumptions. 
\end{rem}

\begin{cor}\label{cor:4} Assume that  $D$     kernel/range decomposes. The followings are equivalent:
\begin{enumerate}
  \item $\|Bu\| \gtrsim \|u\|$ for all $u\in \closran D$ and   $BD$  bisectorial in $\mX$.
  \item $\|B^*u^*\| \gtrsim \|u^*\|$ for all $u^*\in \closran {D^*}$ and   $B^*D^*$  bisectorial in $\mX^*$.
\end{enumerate}
Moreover the angles are the same. If either one holds, then $DB$ and $D^*B^*$ are also bisectorial, with same angle. The same holds with $R$-bisectorial replacing bisectorial everywhere if $\mX$ is  an $L^p$ space with  $\sigma$-finite measure and $1<p<\infty$.   
\end{cor}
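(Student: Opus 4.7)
The plan is to deduce $(1) \Rightarrow (2)$ directly by running the chain Proposition~\ref{prop:0}(3) $\to$ Proposition~\ref{prop:0}(5) $\to$ Corollary~\ref{cor:3} $\to$ taking adjoints, and then to obtain $(2) \Rightarrow (1)$ by applying the same implication in the reflexive dual space $\mX^*$. A preliminary observation I would record is that ``$D$ kernel/range decomposes'' transfers to $D^*$: since $\ran D$ is then closed, the closed range theorem yields $\nul(D)^\perp = \closran{D^*}$ and $(\closran D)^\perp = \nul(D^*)$, so that the adjoint $P^*$ of the bounded projection $P$ onto $\nul(D)$ along $\closran D$ is a bounded projection on $\mX^*$ with range $\nul(D^*)$ and kernel $\closran{D^*}$.

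For $(1) \Rightarrow (2)$, I would start from the fact that bisectoriality of $BD$ in the reflexive space $\mX$ forces $BD$ to kernel/range decompose. Proposition~\ref{prop:0}(3), applicable because $\|Bu\| \gtrsim \|u\|$ on $\closran D$ and $D$ KR-decomposes, then gives $\nul(BD) = \nul(D)$, hence $\mX = \nul(D) \oplus \closran{BD}$. This unlocks Proposition~\ref{prop:0}(5): item (iii) produces the dual coercivity $\|B^*u^*\| \gtrsim \|u^*\|$ on $\closran{D^*}$, and item (iv) gives $(DB)^* = B^*D^*$. Corollary~\ref{cor:3} next upgrades bisectoriality of $BD$ to bisectoriality of $DB$ with the same angle, and taking adjoints (which preserves bisectoriality and angle on reflexive spaces) produces (2). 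The direction $(2) \Rightarrow (1)$ is then obtained by re-running this very argument in $\mX^*$ with $D, B$ replaced by $D^*, B^*$; the conclusion transfers back to $\mX$ through $\mX^{**} = \mX$, $D^{**} = D$, $B^{**} = B$ via reflexivity.

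The ``moreover'' claim falls out of the proof itself: $DB$ was obtained bisectorial along the way, and $D^*B^* = (BD)^*$ is the adjoint of a bisectorial operator on a reflexive space, hence bisectorial with the same angle. The $R$-bisectorial version runs along identical lines, invoking the $R$-bisectorial statement already present in Corollary~\ref{cor:3} together with the standard fact that in $L^p$ with $1 < p < \infty$, $R$-boundedness of a family in $\mL(L^p)$ is equivalent to $R$-boundedness of its adjoint family in $\mL(L^{p'})$, so that adjoints of $R$-bisectorial operators are $R$-bisectorial with the same angle. The only genuine difficulty, already flagged in the two preceding remarks, is the intrinsic asymmetry between $BD$ and $B^*D^*$: the dual coercivity does not follow from its primal counterpart alone, so the whole argument must be routed through Proposition~\ref{prop:0}(5), and this in turn forces us to first produce the splitting $\mX = \nul(D) \oplus \closran{BD}$ out of the kernel/range decomposition of $BD$ rather than from the coercivity by itself.
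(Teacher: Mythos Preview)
Your proof follows exactly the paper's route: obtain $\mX=\nul(D)\oplus\closran{BD}$ from bisectoriality of $BD$ (the paper packages this as Corollary~\ref{cor:2}), invoke Proposition~\ref{prop:0}(5) for the dual coercivity and $(DB)^*=B^*D^*$, pass to $DB$ via Corollary~\ref{cor:3}, and dualize. One slip in your preliminary remark: the kernel/range decomposition of $D$ does \emph{not} force $\ran D$ to be closed (take $D$ injective with dense, non-closed range), so the closed range theorem is unavailable; nevertheless the annihilator identities $(\closran D)^\perp=\nul(D^*)$ and $\nul(D)^\perp=\closran{D^*}$ hold for any closed densely defined $D$ on a reflexive space, so your conclusion that $D^*$ kernel/range decomposes---needed for the ``by symmetry'' step, which the paper leaves implicit---is correct.
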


\begin{proof} It is enough to assume (1) by symmetry (recall that we assume $\mX$ reflexive). That $\|B^*u^*\| \gtrsim \|u^*\|$ for all $u^*\in \closran {D^*}$ follows from Corollary \ref{cor:2} and Proposition \ref{prop:0}, item (5). Next, as  $B^*D^*=(DB)^*$ by Proposition \ref{prop:0}, item (5), and as  $DB$ is bisectorial by Corollary \ref{cor:3}, $B^*D^*$ is also bisectorial by general theory. This proves the equivalence. Checking details, one sees that the angles are the same. Bisectoriality of $DB$ and $D^*B^*$ are already used in the proofs. The proof is the same for $R$-bisectoriality, which is stable under taking adjoints on reflexive $L^p$ space with $\sigma$-finite measure (see \cite[Corollary 2.1]{KunW}).
\end{proof}

\section{First order constant coefficients differential systems}

Assume now that $D$ is  a first order differential operator on $\R^n$ acting on functions valued in $\C^N$ whose symbol satisfies the conditions (D0), (D1) and (D2) in \cite{HM}. We do not assume that $D$ is self-adjoint. Let $1<q<\infty$ and $\dom_{q}(D)=\{u\in L^q\, ; \, Du\in L^q\}$ with $L^q:=L^q(\R^n; \C^N)$ and $D_{q}=D$ on $\dom_{q}(D)$.  We keep using the notation $D$ instead of $D_{q}$ for simplicity.   The followings properties have been  shown in \cite{HMP2}.
\begin{enumerate}
  \item $D$ is a $R$-bisectorial operator with $H^\infty$-calculus in $L^q$.
\item $L^q=N_{q}(D)\oplus \clos{\ran_{q}(D)}$.
  \item $N_{q}(D)$ and $\clos{\ran_{q}(D)}$, $1<q<\infty$, are complex interpolation families.
  \item $D$ has the coercivity condition
  $$
  \|\nabla u \|_{q} \lesssim \| Du\|_{q}\quad \mathrm{for\ all\ } u \in \dom_{q}(D)\cap \clos{\ran_{q}(D)} \subset W^{1,q}.$$ Here, we use the notation $\nabla u$ for $\nabla \otimes u$. 
  \item The same properties hold for $D^*$.
\end{enumerate}

Let us add one more property.

\begin{prop}\label{prop:domains} Let $t>0$.
The spaces $\dom_{q}(D)$, $1<q<\infty$, equipped with the  norm $|||f|||_{q,t}:=\|f\|_{q}+t\|Df\|_{q}$, form a complex interpolation family. The same holds for $D^*$.
\end{prop}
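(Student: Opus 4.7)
The plan is to realize the family $\{\dom_q(D)\}_{1<q<\infty}$ as a retract of the standard complex interpolation scale $\{L^q(\R^n;\C^N)\oplus L^q(\R^n;\C^N)\}_{1<q<\infty}$, and then invoke the retraction theorem for complex interpolation (see, e.g., Bergh--L\"ofstr\"om, Theorem 6.4.2). For fixed $t>0$, I equip $L^q\oplus L^q$ with the natural sum norm and take as coretraction the isometric embedding
\[
  J_t:(\dom_q(D),|||\cdot|||_{q,t})\to L^q\oplus L^q,\qquad J_t(f):=(f,tDf).
\]

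For the retraction I would use the bounded holomorphic functional calculus of $D$ on $L^q$ recorded above. Let $\varphi(z):=(1+z^2)^{-1}$ and $\psi(z):=z(1+z^2)^{-1}$; via partial fractions these read $\varphi(tD)=\tfrac12[(I+itD)^{-1}+(I-itD)^{-1}]$ and $\psi(tD)=\tfrac{i}{2}[(I+itD)^{-1}-(I-itD)^{-1}]$, so their boundedness on $L^q$, uniformly in $t>0$, already follows from the bisectoriality of $D$. Setting
\[
  R_t(u,v):=\varphi(tD)u+\psi(tD)v,
\]
the algebraic identities $\varphi(z)+z\psi(z)=1$ and $z\psi(z)=1-\varphi(z)$ translate respectively into $R_t\circ J_t=I$ on $\dom_q(D)$ and into $R_t$ landing in $\dom_q(D)$ with $|||R_t(u,v)|||_{q,t}\lesssim\|u\|_q+\|v\|_q$.

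Finally, since the same formulas define $J_t$ and $R_t$ for every $q$ and agree on $L^{q_0}\cap L^{q_1}$ (the resolvents of $D$ being consistent across the $L^q$-scale), $J_t$ and $R_t$ give bounded morphisms between the interpolation couples $(L^{q_0}\oplus L^{q_0},L^{q_1}\oplus L^{q_1})$ and $(\dom_{q_0}(D),\dom_{q_1}(D))$ for any $1<q_0<q_1<\infty$. The retraction theorem then yields $(\dom_{q_0}(D),\dom_{q_1}(D))_\theta=\dom_q(D)$ with equivalent norms, where $1/q=(1-\theta)/q_0+\theta/q_1$ and $\theta\in(0,1)$. The statement for $D^*$ follows by the same argument, since $D^*$ enjoys the identical structural properties by item (5). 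I expect the only real point requiring care to be the construction of $R_t$: once the symbol identity $\varphi+z\psi=1$ is observed, the rest is the standard retract/coretract principle, together with the compatibility of the resolvents of $D$ across the $L^q$-scale.
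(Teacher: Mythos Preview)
Your proof is correct, but it takes a somewhat more elaborate route than the paper's. The paper observes that for each $t>0$ the single map $(I+itD)^{-1}$ is a $q$-consistent isomorphism from $(L^q,\|\cdot\|_q)$ onto $(\dom_q(D),|||\cdot|||_{q,t})$, with bounds uniform in $t$ (the inverse being $I+itD$, which has operator norm $\le 1$ for the graph norm); functoriality of complex interpolation then transports the $L^q$-scale to the $\dom_q(D)$-scale in one line. Your approach instead realizes $\dom_q(D)$ as a retract of $L^q\oplus L^q$ via the pair $(J_t,R_t)$, which requires constructing and checking the retraction $R_t$ out of two resolvent pieces $\varphi(tD)$, $\psi(tD)$. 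Both arguments rest on exactly the same input---uniform resolvent bounds for $D$ and their compatibility across the $L^q$-scale---so nothing is gained or lost conceptually; the paper's version is simply shorter because a consistent isomorphism is already its own retract/coretract pair. Your method has the mild advantage of generalizing verbatim to situations where one only has a bounded left inverse rather than a full isomorphism, though that extra generality is not needed here.
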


\begin{proof} Since $D$ is bisectorial in $L^q$, we have $\|(1+itD)^{-1}u\|_{q} \le C\|u\|_{q}$ with $C$ independent of $t$. [To be precise, we should write $D_{q}$ for $q$ and use that the resolvents are compatible for different values of $q$, that is the resolvents for different $q$ agree on the intersection  of  the $L^q$'s.] Thus
$
(I+itD)^{-1}\colon (L^q, \|\ \|_{q}) \to (\dom_{q}(D), |||\ |||_{q,t})$ is an isomorphism with uniform bounds with respect to $t$: $$\|u\|_{q}\le \|(I+itD)^{-1}u\|_{q}+ t\|D(I+itD)^{-1}u\|_{q}\le (C+1) \|u\|_{q}.$$ The conclusion follows by the fonctoriality of complex interpolation. 
\end{proof}

\section{Perturbed first order differential systems}

Let $B\in L^\infty(\R^n; \mL(\C^N))$. Identified with the operator of multiplication by $B(x)$, $B\in \mL(L^q)$ for all $q$. Its adjoint $B^*$ has the same property. With $D$ as before, introduce the set
$$
\mI(BD)=\{q\in (1,\infty)\, ;\, \|Bu\|_{q} \gtrsim \|u\|_{q} \ \mathrm{for\ all\ } u\in \ran_{q}(D)\}.
$$
By density, we may replace $\ran_{q}(D)$ by its closure.  For $q\in \mI(BD)$, $B|_{\clos{\ran_{q}(D)}}: \clos{\ran_{q}(D)} \to
\clos{\ran_{q}(BD)}$ is an isomorphism. Let
$$b_{q}=\inf  \left(\frac{\|Bu\|_{q} }{ \|u\|_{q}}\, ;\, u \in \clos{\ran_{q}(D)}, u\ne 0\right)>0.$$

\begin{lem} The set $\mI(BD)$ is open.
\end{lem}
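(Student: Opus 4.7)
My plan is to deduce the openness of $\mI(BD)$ from {\v{S}}ne{\u\i}berg's stability theorem for bounded operators on complex interpolation scales. The two inputs I rely on are: the $L^q$ scale is a complex interpolation family in the usual sense, and by property (3) recalled in Section 3, the closed subspaces $\clos{\ran_{q}(D)}$, $1<q<\infty$, also form a complex interpolation family. Multiplication by $B$ is bounded $L^q\to L^q$ uniformly in $q$ with norm at most $\|B\|_{\infty}$, so by restriction it defines a compatible bounded operator
\[
T_q := B|_{\clos{\ran_{q}(D)}} : \clos{\ran_{q}(D)} \longrightarrow L^q,
\]
whose norm is controlled uniformly in $q\in(1,\infty)$.

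The condition $q\in \mI(BD)$ is exactly the statement that $T_q$ admits a lower bound $b_q>0$, equivalently that $T_q$ is an injection with closed range and bounded inverse from the range back to $\clos{\ran_{q}(D)}$. Fix $q_0\in\mI(BD)$. Writing $\frac{1}{q}=\frac{1-\theta}{q_0^-}+\frac{\theta}{q_0^+}$ for a small interval $(q_0^-,q_0^+)\subset(1,\infty)$ containing $q_0$, the families $(\clos{\ran_{q}(D)})$ and $(L^q)$ become interpolation scales indexed by $\theta\in(0,1)$ and $T_q$ is a morphism between them. At the base point corresponding to $q_0$ we have the lower bound $b_{q_0}$.

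By {\v{S}}ne{\u\i}berg's theorem, the set of parameters $\theta$ for which $T_\theta$ has a lower bound is open, and in fact one has a quantitative lower bound that depends only on $b_{q_0}$, on the uniform norm bound of $T_q$, and on the interpolation norms. Pulling this back through the change of variable between $\theta$ and $q$ shows that a whole neighborhood of $q_0$ lies in $\mI(BD)$, which gives openness.

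The only step I view as requiring real care is the verification that {\v{S}}ne{\u\i}berg applies with the interpolation family $(\clos{\ran_{q}(D)})_{1<q<\infty}$ as the source scale; this rests on property (3) of Section 3, i.e.\ on the fact that the spectral projection of $D$ onto $\clos{\ran_{q}(D)}$ behaves well under complex interpolation, so that the interpolated norms on $\clos{\ran_{q}(D)}$ agree (up to equivalent norms) with the induced $L^q$ norms. Once this is granted the conclusion is immediate, and the routine computation of the size of the neighborhood in terms of $b_{q_0}$ and $\|B\|_\infty$ is not needed to establish mere openness.
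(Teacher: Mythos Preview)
Your proof is correct and follows essentially the same approach as the paper: you apply {\v{S}}ne{\u\i}berg's stability theorem to the bounded map $B:\clos{\ran_q(D)}\to L^q$, using that both families are complex interpolation scales (property (3) of Section~3) and that $B$ is uniformly bounded with lower bound $b_{q_0}$ at $q_0\in\mI(BD)$. The paper's proof is terser but identical in substance; your additional remarks on the reparametrization by $\theta$ and on why property~(3) is needed are elaborations rather than a different route.
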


\begin{proof}    We  have for all $1<q<\infty$, 
$\|Bu\|_{q}\le \|B\|_{\infty}\|u\|_{q}$. Thus, the bounded map $B\colon \clos{\ran_{q}(D)} \to L^q$ is bounded below by $b_{q}$ for each $q\in \mI(BD)$. Using that $\clos{\ran_{q}(D)}$ and $L^q$ are complex interpolation families,  the result follows from a result of  {\v{S}}ne{\u\i}berg \cite{Snei} (see also Kalton-Mitrea \cite{KalMit}).
\end{proof}

\begin{rem}
 If $B$ is invertible in $ L^\infty(\R^n; \mL(\C^N))$, then $B$ is invertible in $\mL(L^q)$ and its inverse is the operator of multiplication by $B^{-1}$. In this case, $\mI(BD)=(1,\infty)$. 
\end{rem}

For next use, let us recall the statement of  {\v{S}}ne{\u\i}berg (concerning lower bound) and Kalton-Mitrea (concerning invertibility even in the quasi-Banach case). 

\begin{prop}\label{prop:sneiberg} Let $(X_{s})$ and $(Y_{s})$ be two complex interpolation families of Banach spaces for $0<s<1$. Let $T$ be an operator with $C= \sup_{0<s<1} \|T\|_{\mL(X_{s}, Y_{s})}<\infty$. Assume that for  $s_{0}\in (0,1)$ and $\delta >0$,  $\|Tu\|_{Y_{s_{0}}} \ge \delta \|u\|_{X_{s_{0}}}$ for all $u\in X_{s_{0}}$. Then, there is an interval $J$ around $s_{0}$ whose length is bounded below by a number depending  on $C, \delta, s_{0}$ on which $\|Tu\|_{Y_{s}} \ge \frac \delta 2 \|u\|_{X_{s}}$ for all $u\in X_{s}$ and all $s\in J$.  If, moreover, $T$ is invertible in $\mL(X_{s_{0}}, Y_{s_{0}})$ with lower bound $\delta $ then there is an interval $J$ whose length is bounded below by a number depending  on $C, \delta, s_{0}$ on which $T$ is invertible with  $\|Tu\|_{Y_{s}} \ge \frac \delta 2 \|u\|_{X_{s}}$ for all $u\in X_{s}$ and all $s\in J$.  
\end{prop}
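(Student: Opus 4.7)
The plan is to follow the classical {\v{S}}ne{\u\i}berg strategy via complex interpolation representatives. The starting point is the variational characterization of the interpolation norm: for every $u\in X_{s}$ and every $\epsilon>0$ one selects an analytic $F\colon\overline{S}\to\sum_{s'}X_{s'}$ on the strip $S=\{z\in\C:0<\re z<1\}$, continuous on $\overline{S}$, with $F(s)=u$ and $\max(\sup_{t}\|F(it)\|_{X_{0}},\sup_{t}\|F(1+it)\|_{X_{1}})\le(1+\epsilon)\|u\|_{X_{s}}$. The composition $G(z)=TF(z)$ is then analytic from $\overline{S}$ with boundary bounds $\le C(1+\epsilon)\|u\|_{X_{s}}$, while the hypothesis at $s_{0}$ gives $\|G(s_{0})\|_{Y_{s_{0}}}\ge\delta\|F(s_{0})\|_{X_{s_{0}}}$.

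The central step is to transfer this lower bound from $s_{0}$ to a nearby $s$. I would introduce the conformal map $\varphi\colon S\to\mathbb{D}$ with $\varphi(s_{0})=0$ and dualize: for a near-optimal unit functional $\ell\in Y_{s}^{*}$, pick an analytic lift $\Lambda$ within the dual interpolation family with $\Lambda(s)=\ell$ and bounded boundary values, then form the scalar analytic function $h(z)=\langle\Lambda(z),G(z)\rangle$. A Hadamard three-lines / Schwarz-lemma argument applied to $h\circ\varphi^{-1}$ on $\mathbb{D}$, combined with the lower bound on $|h(s_{0})|$ extracted from the hypothesis, yields $|h(s)|\ge(\delta/2)\|u\|_{X_{s}}$ provided $|\varphi(s)|$, and hence $|s-s_{0}|$, is smaller than an explicit quantity depending only on $C$, $\delta$, $s_{0}$. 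Taking the supremum over $\ell$ gives the first conclusion on an interval $J$ around $s_{0}$. For the invertibility part, this lower bound already ensures injectivity and closed range at $s$; surjectivity is obtained by a Neumann-type iteration seeded at $s_{0}$: lift $v\in Y_{s}$ to an analytic $g$, solve $Tu_{0}=g(s_{0})$ using the invertibility at $s_{0}$, extend to a candidate $f_{0}$ at $s$, and iteratively correct the residual $g-Tf_{0}$, which vanishes at $s_{0}$, by dividing out the vanishing factor $\varphi(z)$. Geometric convergence holds precisely when $|\varphi(s)|$ lies below an explicit threshold, which fixes the interval of invertibility.

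The main obstacle I anticipate is the dualized Schwarz transfer: moving from an operator lower bound at $s_{0}$ to a pointwise lower bound for the vector-valued analytic function $G$ at a nearby $s$ requires an analytic lift within the dual interpolation family and a quantitative Schwarz-lemma argument, and only careful tracking of its constants produces the factor $2$ in the conclusion and pins down the explicit length of $J$ in terms of $C$, $\delta$, $s_{0}$. The extension due to Kalton--Mitrea into the quasi-Banach range is a further technical refinement, requiring one to replace the Calder\'on-style analytic-representative framework by its more general analogue while preserving the same quantitative perturbation estimates.
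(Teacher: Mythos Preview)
The paper does not prove this proposition at all: it is explicitly introduced as a recalled statement, with the lower-bound part attributed to {\v{S}}ne{\u\i}berg and the invertibility part (including the quasi-Banach extension) to Kalton--Mitrea, the only emphasis added being that the length of $J$ depends only on $C$, $\delta$, $s_{0}$. There is therefore no proof in the paper to compare your attempt against.

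That said, a comment on your sketch. The iterative scheme you outline for the invertibility part---lift $v$ to an analytic $g$, solve at $s_{0}$, divide the residual by $\varphi$, and iterate to geometric convergence---is exactly the {\v{S}}ne{\u\i}berg/Kalton--Mitrea mechanism and is sound. Your proposed route for the bare lower-bound part, however, has a genuine gap precisely where you anticipate one. Choosing $\Lambda$ near-optimal at $s$ gives $h(s)\approx\|Tu\|_{Y_{s}}$, which is the quantity you want to bound from \emph{below}; but the hypothesis at $s_{0}$ is an operator lower bound $\|TF(s_{0})\|_{Y_{s_{0}}}\ge\delta\|F(s_{0})\|_{X_{s_{0}}}$, which says nothing about the particular pairing $h(s_{0})=\langle\Lambda(s_{0}),TF(s_{0})\rangle$, and Schwarz/three-lines yield upper, not lower, bounds on $|h|$. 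The standard fix uses the same ``divide by $\varphi$'' trick on the $\mathcal{F}$-space side rather than a scalar pairing: pick a near-optimal representative $G\in\mathcal{F}(Y)$ of $Tu$, so $(TF-G)(s)=0$ and hence $\|TF(s_{0})-G(s_{0})\|_{Y_{s_{0}}}\le|\varphi_{s}(s_{0})|\,\|TF-G\|_{\mathcal{F}(Y)}$; combine this with the hypothesis at $s_{0}$ and with a symmetric estimate giving a lower bound on $\|F(s_{0})\|_{X_{s_{0}}}$ in terms of $\|u\|_{X_{s}}$ (obtained the same way, with the roles of $s$ and $s_{0}$ swapped). This two-sided comparison yields the claimed $\delta/2$ lower bound once $|\varphi_{s}(s_{0})|$---and hence $|s-s_{0}|$---is below an explicit threshold in $C$, $\delta$, $s_{0}$.
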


Our point  is that the lower bound on the size of $J$ is universal for complex families.   Define two more sets related to the operator $BD$: 
\begin{align*} \mB(BD)&=\{ q \in \mI(BD)\, ; \, BD \ \mathrm{bisectorial\ in \ } L^q\}
\\
\mR(BD)&=\{ q \in \mI(BD)\, ; \, BD \  R\mathrm{-bisectorial\ in \ } L^q\}
\end{align*}
Note that these are subsets of $\mI(BD)$.  We can define the analogous sets for $B^*D^*$.

\begin{prop}\label{prop:dual} Let $1<p<\infty$. Then $p\in \mR(BD)$ (resp. $\mB(BD)$) if and only if $p'\in \mR(B^*D^*)$ (resp. $\mB(B^*D^*)$). 
\end{prop}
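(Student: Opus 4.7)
The plan is simply to invoke Corollary \ref{cor:4} in the concrete setting of Section 4, once we verify that its hypotheses are met with $\mX=L^p(\R^n;\C^N)$ and $\mX^*=L^{p'}$. The ambient spaces are reflexive for $1<p<\infty$, and the base measure is $\sigma$-finite Lebesgue measure, so the part of Corollary \ref{cor:4} concerning $R$-bisectoriality is applicable. The only structural hypothesis to check is that $D$ kernel/range decomposes in $L^p$, and this is exactly property (2) recalled in Section 3 for the class of operators $D$ considered here (and property (5) ensures the same for $D^*$).

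Once this is set up, I would unwind the definitions. Saying $p\in\mB(BD)$ means $p\in\mI(BD)$, i.e.\ $\|Bu\|_p\gtrsim\|u\|_p$ on $\closran_p D$, together with the bisectoriality of $BD$ in $L^p$; this is precisely condition (1) of Corollary \ref{cor:4} applied with $\mX=L^p$. Symmetrically, $p'\in\mB(B^*D^*)$ reads as $\|B^*u^*\|_{p'}\gtrsim\|u^*\|_{p'}$ on $\closran_{p'}D^*$ together with bisectoriality of $B^*D^*$ in $L^{p'}$, which is exactly condition (2) of the same corollary (using the identification $(L^p)^*=L^{p'}$ with the usual sesquilinear duality). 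Corollary \ref{cor:4} then delivers the equivalence, and even matches the angles.

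For the $R$-bisectorial statement I would repeat the argument verbatim, citing the last sentence of Corollary \ref{cor:4}, whose applicability rests on the $\sigma$-finite $L^p$ framework.

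There is no genuine obstacle here: the whole content has been packaged into Corollary \ref{cor:4} and the enumerated properties of $D$ in Section 3. The only point worth explicitly noting, to forestall any worry about symmetry between $B$ and $B^*$, is that the coercivity $\|B^*u^*\|_{p'}\gtrsim\|u^*\|_{p'}$ on $\closran_{p'}D^*$ is \emph{not} assumed separately but is automatically produced by Proposition \ref{prop:0}(5iii) via Corollary \ref{cor:2}, using that both $D$ and $BD$ kernel/range decompose in $L^p$ (the latter because $BD$ is bisectorial there by hypothesis).
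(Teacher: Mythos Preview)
Your proposal is correct and follows exactly the paper's own approach: the paper's proof consists of the single sentence ``This is Corollary \ref{cor:4}.'' You have simply unpacked the verification of hypotheses that the paper leaves implicit.
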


\begin{proof} This is Corollary \ref{cor:4}. \end{proof}

Our next results are the new observation of this paper, simplifying the approach of \cite{HM}. 

\begin{prop} \label{prop:6}
These sets  are open. 
\end{prop}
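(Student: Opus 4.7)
The plan is to apply the invertibility half of Proposition~\ref{prop:sneiberg} to the resolvent map $T_{\lambda} := I + \lambda BD$, viewed as an operator between two complex interpolation families in $q$. Since $\mB(BD), \mR(BD) \subset \mI(BD)$ and the previous lemma shows that $\mI(BD)$ is open, it suffices to fix $p \in \mB(BD)$ (respectively $p \in \mR(BD)$), choose $\omega' > \omega$ exceeding the corresponding ($R$-)bisectoriality angle at $p$, and produce a neighborhood $J$ of $p$ inside $\mI(BD)$ on which $T_\lambda$ is invertible in $L^q$ uniformly in $\lambda \notin S_{\omega'}$ (and in an $R$-bounded fashion in the second case).

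For any such $\lambda$, regard $T_\lambda$ as a map from $\dom_q(D)$, equipped with $|||f|||_{q,|\lambda|} = \|f\|_q + |\lambda| \|Df\|_q$, into $L^q$. By Proposition~\ref{prop:domains} applied with $t = |\lambda|$ the source spaces form a complex interpolation family in $q$, and so of course do the targets. The operator norm of $T_\lambda$ is at most $1 + \|B\|_\infty$, uniformly in $\lambda$ and in $q$ ranging in a closed subinterval of $\mI(BD)$. At $q = p$, bisectoriality yields $\|T_\lambda^{-1}u\|_p \le C_{\omega'}\|u\|_p$; since $D T_\lambda^{-1}u \in \closran D$ and $\lambda B D T_\lambda^{-1}u = u - T_\lambda^{-1}u$, the lower bound $\|Bv\|_p \ge b_p \|v\|_p$ on $\closran D$ upgrades this to $|\lambda|\,\|D T_\lambda^{-1}u\|_p \le b_p^{-1}(1 + C_{\omega'})\|u\|_p$. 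Hence $T_\lambda^{-1}: L^p \to (\dom_p(D), |||\cdot|||_{p,|\lambda|})$ is bounded uniformly in $\lambda \notin S_{\omega'}$. Proposition~\ref{prop:sneiberg} then produces an interval $J$ around $p$ whose length depends only on these $\lambda$-uniform constants, on which $T_\lambda$ remains invertible with norm at most $2C_{\omega'}$ for every $\lambda \notin S_{\omega'}$. This yields $J \subset \mB(BD)$.

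For $\mR(BD)$, the same argument is carried out on the $\ell^2$-valued Bochner spaces $L^q(\R^n; \C^N \otimes \ell^2)$, which again form a complex interpolation family in $q$ for $1 < q < \infty$. For a finite collection $\lambda_1, \ldots, \lambda_k \notin S_{\omega'}$, consider the diagonal operator $(u_j)_j \mapsto ((I + \lambda_j BD) u_j)_j$ from the corresponding $\ell^2$-valued weighted graph space into $L^q(\ell^2_k)$; $R$-bisectoriality together with the coercivity of $B$ on $\closran D$ yield its invertibility at $q = p$ with constants independent of $k$ and of the $\lambda_j$'s. A second application of Proposition~\ref{prop:sneiberg} extrapolates the invertibility to an interval whose length is likewise uniform in $k$ and in $(\lambda_j)$, and passing to the supremum gives the $R$-bound witnessing $J \subset \mR(BD)$.

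The main technical point is to ensure that the length of $J$ does not depend on $\lambda$ (or on the collection $(\lambda_j)$ in the $R$-case). This is precisely what the $\lambda$-dependent renormalization $|||\cdot|||_{q,|\lambda|}$ of the graph norm achieves: it makes the operator norm of $T_\lambda$ and of its inverse uniform in $\lambda$, so that the Šneĭberg threshold does not shrink to zero as $|\lambda| \to 0$ or $|\lambda| \to \infty$.
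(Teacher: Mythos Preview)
Your proof is correct and follows essentially the same strategy as the paper: view $I+\lambda BD$ as a map from $(\dom_q(D),|||\cdot|||_{q,|\lambda|})$ to $L^q$, obtain $\lambda$-uniform bounds and inverse bounds at the base exponent (the paper phrases this as a lower bound $\|(I+\lambda BD)u\|_q \ge \delta\,|||u|||_{q,|\lambda|}$, you phrase it as a bound on $T_\lambda^{-1}$, which is equivalent), and then invoke Proposition~\ref{prop:sneiberg} together with Proposition~\ref{prop:domains}. For the $R$-bisectorial case the paper works in finite direct sums with the square-function norm and linearizes via Kahane--Khintchine, while you use the $\ell^2$-valued Bochner spaces directly; these are the same argument, the interpolation of the weighted graph spaces in either formulation being guaranteed by the $R$-bisectoriality of $D$ itself.
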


\begin{proof} Let us consider the openness of $\mB(BD)$ first. 
We know that for all $p\in \mI(BD)$, $BD$ is densely defined and closed on $L^p$ from Proposition \ref{prop:0}, item (2). Fix $q \in \mB(BD)$. Let $\omega$ be the angle of bisectoriality in $L^q$ and $\omega<\mu<\pi/2$.  Let $C_{\mu}= \sup_{\lambda\notin S_{\mu}} \|(I+\lambda BD)^{-1}\|_{\mL(L^q)}$. Fix  $\lambda\notin S_{\mu}$. Then
$\|\lambda B D(I+\lambda BD)^{-1}\|_{\mL(L^q)} \le C_{\mu}+1$. Thus for all $u \in \dom_{q}(D)$, 
\begin{align*}
\|(I+\lambda BD)u\|_{q}&\ge (2C_{\mu})^{-1} \|u\|_{q}+ (2C_{\mu}+2)^{-1} |\lambda| \|BD u \|_{q}
\\
& \ge (2C_{\mu})^{-1} \|u\|_{q}+ (2C_{\mu}+2)^{-1}b_{q} |\lambda| \|D u \|_{q}
\\
&
\ge \delta |||u|||_{q,|\lambda|}
\end{align*}
with $\delta = \inf ((2C_{\mu})^{-1}, (2C_{\mu}+2)^{-1}b_{q})>0$.
Also 
$$
\|(I+\lambda BD)u\|_{q} \le C|||u|||_{q,|\lambda|}
$$
with $C= \sup(1, \|B\|_{\infty})$. Applying Proposition \ref{prop:sneiberg} thanks to Proposition \ref{prop:domains}, we obtain  an open interval $J$ about $q$ contained in  $\mI(BD)$ such that for all $\lambda\notin S_{\mu}$ and $p\in J$, $(I+\lambda BD)^{-1}$ is bounded on $L^p$ with bound $2/\delta $. 

The proof for perturbation of $R$-bisectoriality is basically the same, with $C_{\mu}$ being the $R$-bound of $(I+\lambda BD)^{-1}$, that is the best constant in the inequality
$$
\left\|\left(\sum_{j=1}^k|(I+\lambda_{j} BD)^{-1}u_{j}|^2\right)^{1/2}\right\|_{q} \le C
\left\|\left(\sum_{j=1}^k|u_{j}|^2\right)^{1/2}\right\|_{q}
$$
for all $k\in \N$, $\lambda_{1}, \ldots, \lambda_{k}\notin S_{\mu}$ and $u_{1}, \ldots, u_{k}\in L^q$. One works in the sums 
$L^q \oplus \cdots \oplus L^q$ equipped with the norm of the right hand side and $\dom_{q}(D) \oplus \cdots \oplus \dom_{q}(D)$ equipped with 
$$
\left\|\left(\sum_{j=1}^k|u_{j}|^2\right)^{1/2}\right\|_{q} + \left\|\left(\sum_{j=1}^k|\lambda_{j}|^2|Du_{j}|^2\right)^{1/2}\right\|_{q}.
$$
To obtain the $R$-lower bound (replacing $\delta $), one linearizes using the Kahane-Kintchine inequality with the Rademacher functions
$$
\left(\sum_{j=1}^k|u_{j}|^2\right)^{1/2} \sim \left(\int_{0}^1 \bigg| \sum_{j=1}^k r_{j}(t)u_{j}\bigg|^q\, dt\right)^{1/q},
$$
valid for any $q\in (1,\infty)$ (see, for example, \cite{KunW} and follow the argument  above). Details are left to the reader.
\end{proof}

\begin{rem}\label{rem:interval} These sets may not be intervals. They are (possibly empty) intervals  when restricted to each connected component of $\mI(BD)$ because ($R$-)bisectoriality interpolates in $L^p$ scales. See \cite[Corollary 3.9]{KKW} for a proof concerning $R$-bisectoriality.  In particular, if $\mI(BD)=(1,\infty)$ these sets are (possibly empty) open intervals. 
\end{rem}

\begin{thm}\label{thm:equiv} For $p\in \mI(BD)$, the following assertions are equivalent:
\begin{itemize}
  \item[(i)] $BD$ is $R$-bisectorial  in  $L^p$.
  \item[(ii)] $BD$ is bisectorial and has an $H^\infty$-calculus  in $L^p$. 
\end{itemize}
Moreover, the angles in (i) and (ii) are the same. Furthermore, if one of the items holds, then they hold as well for $DB$, and also for $B^*D^*$ and $D^*B^*$ in $L^{p'}$.   
\end{thm}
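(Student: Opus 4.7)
The plan is to combine Proposition~\ref{prop:6} with the results of \cite{HMP1, HMP2}. The direction (ii)$\Rightarrow$(i) is a general fact: bounded $H^\infty$-calculus on $L^p$ implies $R$-boundedness of the resolvent family, hence $R$-bisectoriality, by \cite[Theorem 5.3]{KW} as recalled in the introduction. The angle delivered by this implication is the angle of $H^\infty$-calculus.

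For (i)$\Rightarrow$(ii), the key point is that Proposition~\ref{prop:6} promotes $R$-bisectoriality at the single value $p$ to $R$-bisectoriality on an open interval $J\subset \mI(BD)$ containing $p$, with $R$-bounds controlled uniformly in $q\in J$ and angle arbitrarily close to the original. This is precisely the input required by \cite{HMP2}, whose proof of the converse ``$R$-bisectoriality $\Rightarrow$ bounded $H^\infty$-calculus'' for these first order operators requires $R$-bisectoriality on an open range of $q$'s rather than at a single point. It then yields a bounded $H^\infty$-calculus of $BD$ on $L^p$ with the same angle as the bisectoriality.

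The transfer to the three other operators is structural. By Corollary~\ref{cor:3}, $DB$ is $R$-bisectorial on $L^p$ with the same angle as $BD$. Its $H^\infty$-calculus is obtained by combining the similarity $DB|_{\closran D} = \beta^{-1} BD|_{\closran{BD}} \beta$ from Proposition~\ref{prop:0}(2iii), with $\beta = B|_{\closran D}$, which transports the calculus from $BD|_{\closran{BD}}$ to $DB|_{\closran D}$, together with the trivial calculus on the kernel, using the splitting $L^p = \nul(DB) \oplus \closran D$ provided by Corollary~\ref{cor:2} and Proposition~\ref{prop:0}(4i). For the adjoints, Proposition~\ref{prop:dual} yields $R$-bisectoriality of $B^*D^*$ on $L^{p'}$. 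Since $D^*$ satisfies the same standing hypotheses as $D$ (property~(5) of Section~3) and $p'\in \mI(B^*D^*)$ by Proposition~\ref{prop:0}(5iii), the equivalence established above applies to $B^*D^*$ at $p'$ and gives its $H^\infty$-calculus. Finally, $D^*B^*$ is handled by applying to the pair $(D^*, B^*)$ the argument just used for $DB$.

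The main obstacle has already been resolved in Proposition~\ref{prop:6}: once $R$-bisectoriality has been spread to an interval, the Kalton-Weis implication, the converse from \cite{HMP2}, and the structural lemmas of Section~2 assemble with no further difficulty, and the angles track through each step without loss.
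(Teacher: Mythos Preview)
Your proof is correct and follows the same approach as the paper: extend $R$-bisectoriality to an open interval via Proposition~\ref{prop:6}, then feed this into \cite{HMP2}. The only cosmetic difference is in routing: the paper invokes the specific Corollary~8.17 of \cite{HMP2}, which is stated for $D^*B^*$ in $L^{q'}$, and then dualizes back to $BD$, whereas you appeal to \cite{HMP2} directly for $BD$; and for the ``furthermore'' clause the paper simply cites Corollary~\ref{cor:4}, while you spell out the similarity and kernel/range arguments explicitly.
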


\begin{proof}
The implication (ii) $\Rightarrow$ (i) is a general fact proved in \cite{KW}. Assume conversely that (i) holds. Then, there is an interval  $(p_{1},p_{2})$ around $p$ for which (i) holds with the same angle by Proposition \ref{prop:6}. Note also that (2) and (3) of Proposition \ref{prop:0} apply with $\mX=L^q$ for each $q\in (p_{1},p_{2})$. Hence,  $B^*$ has a lower bound on $\clos{\ran_{q'}(D^*)}$. We may apply Corollary 8.17 of \cite{HMP2}, which states that $D^*B^*$ satisfies (ii) on $L^{q'}$. By duality, we conclude that $BD$ satisfies (ii) in $L^q$. 

The last part of the statement now follows from Corollary \ref{cor:4}.
\end{proof}

\begin{rem} As  $p\in \mR(BD)$ if and only if $p\in \mR(B^*D^*)$,   Proposition \ref{prop:6} and  Theorem \ref{thm:equiv} can be compared to  Theorem 2.5 of \cite{HM} for the stability of $R$-bisectoriality and the equivalence with $H^\infty$-calculus. The argument here   is much easier and fairly general once we have Proposition \ref{prop:domains}. However, the argument in \cite{HM} is useful since it contains a quantitative estimate on how far one can move from $q$. We come back to this below.  Recall that  the motivation of \cite[Theorem 2.5]{HM}, thus reproved here,  is to complete the theory developed in \cite{HMP2}. 
\end{rem}

\section{Relation to  kernel/range decomposition}

For a closed unbounded operator $\mA$ on a Banach space $\mX$, recall that $\mA$    kernel/range decomposes if $\mX=\nul(\mA)\oplus \closran \mA$ and that it is implied by  bisectoriality. The converse is not true (the shift on $\ell^2(\Z)$ is invertible, so the    kernel/range decomposition is trivial, but it is not bisectorial as its spectrum is the unit circle).  For the class of $BD$ operators in the previous section, we shall show that a converse holds. 

For a set $A\subseteq (1,\infty)$, let $A'=\{q'\, ;\, q\in A\}$.

Consider $D$ and $B$ as in Section 4. 
Recall that $p\in\mR(BD)$ if and only if $p'\in \mR(B^*D^*)$. That is $\mR(B^*D^*)'=\mR(BD)$. Recall also that $\mR(BD)\subseteq \mI(BD)$, hence  $\mR(BD)\subseteq \mI(B^*D^*)'$ as well. 

Assume $p_{0}\in \mR(BD)$ and let $I_{0}$ be the connected component of 
$\mI(BD)\cap \mI(B^*D^*)'$ that contains $p_{0}$. It is an open interval.  

Let 
\begin{align*} \mB_{0}(BD)&=\{ q \in I_{0}\, ; \, BD \ \mathrm{bisectorial\ in \ } L^q\}
\\
\mR_{{0}}(BD)&=\{ q \in I_{0}\, ; \, BD \  R\mathrm{-bisectorial\ in \ } L^q\}
\\
 \mH_{0}(BD)&=\{ q \in I_{0}\, ; \, BD \ \mathrm{bisectorial\ in \ } L^q \mathrm{\ with \ } H^\infty\mathrm{-calculus} \}
 \\
 \mS_{{0}}(BD)&=cc_{p_{0}}\{ q \in I_{0}\, ; \, BD \  \mathrm{kernel/range  \ decomposes\ in\ } L^q\}
\end{align*}
The notation $cc_{p_{0}}$ means the connected component that contains $p_{0}$. 

\begin{thm}\label{thm:equal} The four sets above are  equal open intervals.
\end{thm}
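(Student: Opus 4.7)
The plan is to close the circle of inclusions $\mH_0(BD) \subseteq \mR_0(BD) \subseteq \mB_0(BD) \subseteq \mS_0(BD) \subseteq \mH_0(BD)$.

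The three easy inclusions come from general principles: the first from the Kalton-Weis implication $H^\infty$-calculus $\Rightarrow$ $R$-bisectoriality \cite[Theorem 5.3]{KW}; the second tautologically; and the third because bisectoriality on a reflexive Banach space yields the kernel/range decomposition (noted in the introduction), while $\mB_0(BD)$ is an interval containing $p_0$ and therefore lies inside the connected component $\mS_0(BD)$. Theorem \ref{thm:equiv} identifies $\mR_0(BD)$ with $\mH_0(BD)$. Proposition \ref{prop:6} and Remark \ref{rem:interval} (noting that $I_0 \subseteq \mI(BD)$ sits inside a single connected component of $\mI(BD)$) supply openness and the interval property for $\mR_0(BD)$ and $\mB_0(BD)$; once the four sets are proven equal, these properties pass to $\mS_0(BD)$ as well.

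The substantial part is the reverse inclusion $\mS_0(BD) \subseteq \mH_0(BD)$. My plan is, given $q \in \mS_0(BD)$, to translate the data at $q$ into the dual-side hypotheses of \cite[Corollary 8.17]{HMP2} on $L^{q'}$. Concretely, from $L^q = \nul(BD) \oplus \closran{BD}$ together with $q \in \mI(BD)$, and using property (2) of Section 3 (so that $D$ itself kernel/range decomposes on $L^q$), Proposition \ref{prop:0}(3) first upgrades the splitting to $L^q = \nul(D) \oplus \closran{BD}$; Proposition \ref{prop:0}(5) then produces the dual splitting $L^{q'} = \nul(D^*B^*) \oplus \closran{D^*}$ together with the dual coercivity $\|B^*u^*\|_{q'} \gtrsim \|u^*\|_{q'}$ for $u^* \in \closran{D^*}$. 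Feeding these into the HMP2 criterion on $L^{q'}$ would give $H^\infty$-calculus for $D^*B^*$ there, and Corollary \ref{cor:4} would transfer this back to $H^\infty$-calculus for $BD$ on $L^q$, placing $q$ in $\mH_0(BD)$.

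The main obstacle I anticipate is checking that the HMP2 criterion is really triggered by this splitting-plus-coercivity package at $q$ alone, without any prior $R$-bisectoriality on $L^{q'}$. If a direct application is not available, the backup is a bootstrap: starting from $p_0 \in \mR_0(BD)$, iterate the quantitative \cite{HM} extrapolation of $R$-bisectoriality along $I_0$, using the splitting that is available at every point of $\mS_0(BD)$ as the uniform control that prevents the extrapolation constants from blowing up at the boundary of $\mR_0(BD)$ inside $\mS_0(BD)$. This is precisely where the argument of \cite{HM} would re-enter, consistent with the introduction's remark that ``the first proof has impact on the splitting of the space by the kernel and range''.
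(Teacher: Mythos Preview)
Your chain of easy inclusions and the identifications via Theorem~\ref{thm:equiv}, Proposition~\ref{prop:6}, and Remark~\ref{rem:interval} are correct and match the paper.

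The gap is in the hard direction $\mS_0(BD)\subseteq \mR_0(BD)$. Your first route does not work: Corollary~8.17 of \cite{HMP2}, as invoked in the proof of Theorem~\ref{thm:equiv}, takes $R$-bisectoriality of $BD$ on an \emph{open interval} of exponents as input and returns $H^\infty$-calculus. The kernel/range splitting together with the dual coercivity at a single $q$ is not a hypothesis it accepts; indeed the whole content of the theorem is that splitting alone forces $R$-bisectoriality, so you cannot presuppose the latter in order to feed \cite{HMP2}. There is no shortcut here that avoids the \cite{HM} machinery.

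Your backup has the right ingredients but misidentifies the mechanism. The splitting is not used to ``prevent blowup of extrapolation constants''. What actually happens is this: starting from $p\in\mR_0(BD)$, the \cite{HM} argument, run on the closed range only and stripped of any splitting hypotheses, shows that $BD|_{\clos{\ran_q(BD)}}$ is $R$-bisectorial for every $q\in I_0\cap(p_*,p^*)$, including values of $q$ outside $\mR_0(BD)$. On $\nul_q(BD)$ the operator is zero, hence trivially $R$-bisectorial. The role of $q\in\mS_0(BD)$ is then purely to \emph{glue} these two pieces: the decomposition $L^q=\nul_q(BD)\oplus\clos{\ran_q(BD)}$ promotes $R$-bisectoriality on each summand to $R$-bisectoriality on the full $L^q$, i.e.\ $q\in\mR_0(BD)$. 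A contradiction argument at the endpoints of $\mR_0(BD)=(r_-,r_+)$ then closes the inclusion. You should recast your final step along these lines; as written, the ``uniform control'' picture is not what is going on.
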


It should be noted that the theorem assumes non emptyness of $\mR_{{0}}(BD)$.

\begin{proof} It is clear that $ \mH_{0}(BD)\subseteq \mR_{{0}}(BD) \subseteq  \mB_{0}(BD)$.
 By  Proposition \ref{prop:6} and the discussion in Remark \ref{rem:interval},   $\mR_{{0}}(BD)$ and $ \mB_{0}(BD) $ are open subintervals of $I_{0}$. By Theorem \ref{thm:equiv}, we also know that $\mH_{0}(BD)= \mR_{{0}}(BD)$.
 
As bisectoriality implies    kernel/range decomposition,  $\mB_{0}(BD)$ is contained in the set $ \{ q \in I_{0}\, ; \, BD \ \mathrm{kernel/range  \ decomposes\ in\ } L^q\}$.   As  $\mB_{0}(BD)$ contains $p_{0}$, we have $\mB_{0}(BD) \subseteq \mS_{0}(BD)$.    Thus it remains to show that $\mS_{{0}}(BD) \subseteq \mR_{{0}}(BD)$, which is done in the next results.
\end{proof}  

For $1<p<\infty$, let $p^*, p_{*}$ be the upper and lower Sobolev exponents: $p^*= \frac{np}{n-p}$ if $p<n$ and $p^*=\infty$ if $p\ge n$, while $p_{*}= \frac{np}{n+p}$.

\begin{lem} Let $p\in  \mR_{{0}}(BD)$. Then $BD|_{\clos{\ran_{q} ({BD})}}$ is $R$-bisectorial (in $\clos{\ran_{q}( {BD})}$) for $q\in I_{0}\cap (p_{*}, p^*)$. 
\end{lem}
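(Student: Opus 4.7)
The plan is to propagate $R$-bisectoriality from $p$ to $q \in I_0 \cap (p_*, p^*)$ by combining two ingredients: the open interval $J \subseteq \mR_0(BD)$ around $p$ from Proposition \ref{prop:6}, on which $R$-bisectoriality of $BD$, and hence of $DB|_{\clos{\ran_r(D)}}$ by Corollary \ref{cor:3}, is known; and the coercivity $\|\nabla u\|_r \lesssim \|Du\|_r$ on $\dom_r(D) \cap \clos{\ran_r(D)}$, which via the Sobolev embedding $W^{1,r} \hookrightarrow L^{r^*}$ converts $L^r$ control of $Du$ into $L^{r^*}$ control of $u$. Using Proposition \ref{prop:0}(2iii), I would reduce to the similar operator $DB|_{\clos{\ran_q(D)}}$, since this side is where coercivity applies directly.

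Fix $r \in J$ close to $p$ and $\lambda \notin S_\mu$ for some $\mu$ larger than the angle of $R$-bisectoriality at $r$. For $h$ in a dense subset of $\clos{\ran_r(D)}$, set $v = (I + \lambda DB)^{-1} h \in \clos{\ran_r(D)}$. The resolvent identity yields both $\|v\|_r \lesssim \|h\|_r$ and $|\lambda|\,\|DBv\|_r = \|h-v\|_r \lesssim \|h\|_r$. Using $DBv = D(Bv)$ together with the quantitative lower bound $\|B\cdot\|_r \gtrsim \|\cdot\|_r$ on $\clos{\ran_r(D)}$ (from $r \in \mI(BD)$), one then extracts a Sobolev-type estimate $\|v\|_{r^*} \lesssim \|h\|_r$ uniformly in $\lambda$. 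Interpolation between this and the $L^r$ bound gives uniform resolvent bounds on $\clos{\ran_r(D)} \cap L^q$ for $q \in (r, r^*)$; the dual range $q \in (r_*, r)$ follows by applying the same reasoning to $B^*D^*$ on $L^{q'}$, using Proposition \ref{prop:dual} and Corollary \ref{cor:4}, and then dualizing. Letting $r$ range over $J$ then covers all of $I_0 \cap (p_*, p^*)$.

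To promote these uniform bounds to $R$-bounds, I would run exactly the same computation on Rademacher sums in $L^q(\R^n; \ell^2)$, using the vector-valued versions of coercivity and Sobolev embedding (via Kahane--Khintchine), precisely as in the proof of Proposition \ref{prop:6}. The resulting $R$-bisectoriality of $DB|_{\clos{\ran_q(D)}}$ then transfers back to $BD|_{\clos{\ran_q(BD)}}$ by the similarity of Proposition \ref{prop:0}(2iii).

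The main obstacle is the delicate domain issue: $v \in \dom(DB)$ only guarantees $Bv \in \dom(D)$, not $v \in \dom_r(D)$, so applying coercivity to $v$ requires showing that $v$ itself has a gradient in $L^r$. This needs to be handled by exploiting the structure of $D$ as a constant-coefficient first-order differential operator together with $v \in \clos{\ran_r(D)}$, or by approximating $h$ by elements of $\dom_r(D) \cap \clos{\ran_r(D)}$ and using the compatibility of resolvents at different exponents from Proposition \ref{prop:domains}. Carrying this through so that the Sobolev bootstrap lands in $L^{r^*}$ with $\lambda$-independent constants is the technical heart of the proof.
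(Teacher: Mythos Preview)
Your approach is genuinely different from the paper's, and it has a real gap that you yourself flag but do not close.

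The paper does not give a self-contained argument for the range $q\in I_0\cap(p_*,p)$: it simply invokes Sections~3 and~4 of \cite{HM}, where the extrapolation is carried out by \emph{real} methods---$L^p$--$L^p$ off-diagonal estimates for the resolvent combined with a Calder\'on--Zygmund decomposition adapted to $D$, yielding weak-type bounds at the lower Sobolev exponent and then Marcinkiewicz interpolation. For $q\in I_0\cap(p,p^*)$ the paper dualizes: $p'\in\mR(B^*D^*)$, apply the HM argument to $B^*D^*$ on the range $(p'_*,p')$, and transfer back via items (5) and (2) of Proposition~\ref{prop:0}. No coercivity--Sobolev bootstrap appears.

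Your proposed route has two concrete problems. First, the coercivity inequality $\|\nabla u\|_r\lesssim\|Du\|_r$ is stated for $u\in\dom_r(D)\cap\clos{\ran_r(D)}$, and neither candidate fits: for $v=(I+\lambda DB)^{-1}h$ you have $v\in\clos{\ran_r(D)}$ but only $Bv\in\dom_r(D)$; switching to $BD$ gives $u\in\dom_r(D)\cap\clos{\ran_r(BD)}$, and $\clos{\ran_r(BD)}\ne\clos{\ran_r(D)}$ in general. Your suggestion to ``exploit the structure of $D$'' or approximate is precisely the missing idea, and the compatibility of resolvents in Proposition~\ref{prop:domains} does not supply it. Second, even granting an estimate $\|v\|_{r^*}\lesssim\|h\|_r$, interpolating this against $\|v\|_r\lesssim\|h\|_r$ yields $L^r\to L^q$ bounds, not the $L^q\to L^q$ bounds you need for bisectoriality on $L^q$; you have not explained how to close that gap. (There is also a $|\lambda|^{-1}$ factor from the Sobolev step that must be controlled uniformly.)

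In short, the paper outsources the hard direction to the Calder\'on--Zygmund machinery of \cite{HM}; your Sobolev-embedding strategy is a different idea that, as written, does not go through.
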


\begin{proof} The (non-trivial) argument to extrapolate $R$-bisectoria\-lity at $p$ to $R$-bisectoria\-lity at any $q\in I_{0}\cap (p_{*},p)$ is exactly what is proved  in  Sections 3 and 4 of   \cite{HM}, taken away the arguments related to    kernel/range decomposition which are not assumed here. 
We next provide the argument for $q\in I_{0}\cap (p, p^*)$.  By duality, $p'\in \mR(B^*D^*)$. By symmetry of the assumptions, $B^*D^*|_{\clos{\ran_{q'} ({B^*D^*})}}$ is $R$-bisectorial. By duality of $R$-bisectoriality in subspaces of reflexive Lebesgue spaces and Proposition \ref{prop:0}, item (5), $DB|_{\clos{\ran_{q} ({D})}}$ is $R$-bisectorial. By Proposition \ref{prop:0}, item (2),  this implies that $BD|_{\clos{\ran_{q} ({BD})}}$ is $R$-bisectorial. 
\end{proof}

\begin{cor} $\mS_{{0}}(BD) \subseteq \mR_{{0}}(BD)$.
\end{cor}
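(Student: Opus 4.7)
The plan is a connectedness argument on the open interval $\mS_{0}(BD)$. Set $\Omega := \mS_{0}(BD) \cap \mR_{0}(BD)$. Since $p_{0}$ lies in both sets, $\Omega$ is nonempty. I will show that $\Omega$ is both relatively open and relatively closed in $\mS_{0}(BD)$; since $\mS_{0}(BD)$ is by definition a connected component and hence connected, this forces $\Omega = \mS_{0}(BD)$, which is exactly the desired inclusion.

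Openness of $\Omega$ in $\mS_{0}(BD)$ is immediate from Proposition \ref{prop:6}: $\mR(BD)$ is open in $(1,\infty)$, so $\mR_{0}(BD) = \mR(BD)\cap I_{0}$ is relatively open in $\mS_{0}(BD)\subseteq I_{0}$.

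For closedness, suppose $q \in \mS_{0}(BD)$ is the limit of a sequence $(q_{n}) \subset \Omega$. Since the maps $p \mapsto p_{*}$ and $p \mapsto p^{*}$ are continuous (in the extended sense), for $n$ large enough one has $q \in I_{0} \cap ((q_{n})_{*}, q_{n}^{*})$. Applying the preceding lemma with $p = q_{n} \in \mR_{0}(BD)$ then shows that $BD|_{\clos{\ran_{q}(BD)}}$ is $R$-bisectorial in $\clos{\ran_{q}(BD)}$. Because $q \in \mS_{0}(BD)$, we also have the topological splitting $L^{q} = \nul_{q}(BD) \oplus \clos{\ran_{q}(BD)}$, and on the first summand $BD$ acts as zero, which is trivially $R$-bisectorial. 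Assembling the two parts via the splitting yields that $BD$ is $R$-bisectorial on all of $L^{q}$, so $q \in \mR_{0}(BD) \cap \mS_{0}(BD) = \Omega$. Thus $\Omega$ is closed in $\mS_{0}(BD)$, and connectedness concludes the proof.

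The main difficulty is not in this short assembly but is already embedded in the lemma feeding into it: the downward half, extrapolating $R$-bisectoriality of the restriction $BD|_{\clos{\ran(BD)}}$ from $p$ to $q \in I_{0} \cap (p_{*}, p)$, is the real-variable Calder\'on--Zygmund extrapolation of \cite{HM}, while the upward half to $q \in (p, p^{*})$ is obtained from it by duality together with the symmetry furnished by Proposition \ref{prop:0}.
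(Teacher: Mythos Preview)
Your proof is correct and follows essentially the same approach as the paper's: both use the preceding lemma together with the kernel/range splitting at $q \in \mS_{0}(BD)$ to upgrade $R$-bisectoriality on $\clos{\ran_{q}(BD)}$ to all of $L^{q}$. The only cosmetic difference is that the paper argues by contradiction on the interval endpoints $(s_{-},s_{+})\supseteq(r_{-},r_{+})$, whereas you phrase the same step as a clopen subset of a connected set.
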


\begin{proof} The set $\{ q \in I_{0}\, ; \, BD \ \mathrm{kernel/range\  decomposesg\ in\ } L^q\}$  is open (this was observed in \cite{HM}, again as a consequence of   {\v{S}}ne{\u\i}berg's result). Thus, as a connected component, $\mS_{{0}}(BD)$ is an open interval. 
Write $\mR_{{0}}(BD)=(r_{-},r_{+})$ and $\mS_{{0}}(BD)=(s_{-},s_{+})$ and recall that $(r_{-},r_{+}) \subseteq (s_{-},s_{+})$. Assume $s_{-}<r_{-}$. One can find $p,q$ with $q\in I_{0}\cap (p_{*}, p)$ and $s_{-}<q\le r_{-}<p<r_{+}$.  By the previous lemma, we have that  $BD|_{\clos{\ran_{q} ({BD})}}$ is $R$-bisectorial in $\clos{\ran_{q}( {BD})}$. Also $BD|_{\nul_{q} ({BD})}=0$ is $R$-bisectorial. As $q\in \mS_{{0}}(BD)=(s_{-},s_{+})$, we have $L^q=\clos{\ran_{q}( {BD})}\oplus \nul_{q}(BD)$. Hence, $BD$ is $R$-bisectorial in $L^q$. This is a contradiction as $q\notin \mR_{{0}}(BD)$. Thus $r_{-}\le s_{-}$. The argument to obtain $s_{+}\le r_{+}$ is similar. 
\end{proof}

\begin{rem} It was observed and heavily used in \cite{HM} that for a given $p$, $L^p$ boundedness of the resolvent of $BD$ self-improves to off-diagonal estimates.  Thus, the set of those $p\in I_{0}$ for which one has such estimates in addition to bisectoriality in $L^p$ is equal to $\mB_{0}(BD)$ as well. 

\end{rem}

\section{Self-adjoint $D$ and accretive $B$}

The operators $D$ and $B$ are still as in Section 4.   In addition, 
assume that $D$ is self-adjoint on $L^2$ and  that 
 $B$ is strictly accretive in $\clos{\ran_{2}(D)}$, that is for some $\kappa>0$, 
$$
\re \langle u, Bu \rangle \ge \kappa \|u\|_{2}^2, \quad \forall\ u \in \clos{\ran_{2}(D)}.
$$
Then, $B$ and $B^*$ have lower bound $\kappa$ on $\clos{\ran_{2}(D)}$ and $\clos{\ran_{2}(D^*)}=\clos{\ran_{2}(D)}$. In this case, $BD$ and $DB=(B^*D)^*$ (replacing $B$ by $B^*$) are bisectorial operators in $L^2$. Moreover, using that $B$ is multiplication and $D$ a coercive first order differential operator with constant coefficients,  \cite[Theorem 3.1]{AKM} (see \cite{AAM} for a direct proof) shows that
$BD$ and $DB$ have $H^\infty$-calculus in $L^2$.  Thus,  Theorem \ref{thm:equal} applies and
one has the

\begin{thm}
There exists  an open interval $I(BD)=(q_{-}(BD),q_{+}(BD))\subseteq (1,\infty)$, containing $2$,  with the following dichotomy:        $H^\infty$-calculus, $R$-bisectoriality,   bisectoriality  and    kernel/range decomposition hold for $BD$  in $L^p$ if $p\in I(BD)$ and all fail if $p=q_{\pm}(BD)$.  The same property hold for $DB$ with  $I(DB)=I(BD)$.  The same property hold for $B^*D$ and $DB^*$ in the dual interval $I(DB^*)=I(B^*D)=(I(BD))'$. 
\end{thm}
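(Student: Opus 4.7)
The plan is to apply Theorem \ref{thm:equal} at the base point $p_{0}=2$ and then transport the conclusion to $DB$, $B^{*}D$, and $DB^{*}$ using the abstract material of Section~2.

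First I verify the hypotheses of Theorem \ref{thm:equal} at $p_{0}=2$. From strict accretivity,
\[
\kappa\|u\|_{2}^{2}\le \re\langle u,Bu\rangle \le \|u\|_{2}\|Bu\|_{2}
\]
for $u\in \clos{\ran_{2}(D)}$, giving $\|Bu\|_{2}\ge \kappa\|u\|_{2}$ there, so $2\in \mI(BD)$. Because $D^{*}=D$, the same inequality for $B^{*}$ on $\clos{\ran_{2}(D^{*})}=\clos{\ran_{2}(D)}$ places $2\in \mI(B^{*}D^{*})'$ as well. The Axelsson-Keith-McIntosh theorem \cite{AKM} cited just before the statement gives $BD$ a bounded $H^{\infty}$-calculus on $L^{2}$, and on a Hilbert space this is equivalent to $R$-bisectoriality; hence $2\in \mR(BD)$.

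Theorem \ref{thm:equal} applied with $p_{0}=2$ then yields the connected component $I_{0}$ of $\mI(BD)\cap \mI(B^{*}D^{*})'$ containing $2$, together with the common open subinterval $I(BD):=\mH_{0}(BD)=\mR_{0}(BD)=\mB_{0}(BD)=\mS_{0}(BD)$, which I write $(q_{-}(BD),q_{+}(BD))$. All four properties hold throughout $I(BD)$ by that theorem. At an endpoint $q_{\pm}(BD)$ none can hold: each of the four sets is open by Proposition \ref{prop:6} (the openness of $\mS_{0}$ being recorded in the proof of Theorem \ref{thm:equal}), so were any property to persist at $q_{\pm}$ it would extend to a neighborhood, contradicting the maximality of the open interval. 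To transfer to $DB$, Corollary \ref{cor:3} applies — property (2) of Section~3 ensures $D$ kernel/range decomposes on every $L^{q}$ — and gives $I(BD)\subseteq I(DB)$ with the same angles. For the reverse inclusion I rerun the construction with $B^{*}$ in place of $B$ (still strictly accretive on $\clos{\ran_{2}(D)}$), obtaining $I(B^{*}D)$, and invoke Proposition \ref{prop:dual} together with $D^{*}=D$ to get $I(B^{*}D)=I(BD)'$. Since $(DB)^{*}=B^{*}D^{*}=B^{*}D$ by Proposition \ref{prop:0}(5)(iv), and each of the four properties is stable under taking adjoints in reflexive $L^{p}$ (Corollary \ref{cor:4}), I read off $p\in I(DB)\Leftrightarrow p'\in I(B^{*}D)\Leftrightarrow p\in I(BD)$. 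The same chain yields $I(DB^{*})=I(B^{*}D)=I(BD)'$.

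The step I expect to require most care is the endpoint dichotomy at $q_{\pm}(BD)$: if the endpoint lies interior to $I_{0}$ the maximality argument above closes matters immediately, but if $q_{\pm}$ coincides with an endpoint of $I_{0}$ itself, then the lower bound on $B$ (or on $B^{*}$) degenerates there, and I must invoke Proposition \ref{prop:0}(2)(ii) to note that without this lower bound $BD$ need not even be closed, so bisectoriality — and by the established equivalence all four listed properties — must fail at $q_{\pm}$.
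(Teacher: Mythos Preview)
Your approach is exactly the paper's: verify $2\in\mR(BD)$ from strict accretivity and \cite{AKM}, invoke Theorem~\ref{thm:equal} at $p_0=2$, and then transport the conclusion to $DB$, $B^*D$, $DB^*$ via the Section~2 material and duality. The paper compresses all of this into the single sentence ``Thus, Theorem~\ref{thm:equal} applies,'' so you have simply supplied the details it omits; the transfer arguments through Corollary~\ref{cor:3}, Proposition~\ref{prop:dual} and $(DB)^*=B^*D$ are correct.

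The one soft spot is your final paragraph on the endpoint dichotomy. When $q_\pm$ is interior to $I_0$ the maximality argument is fine, and when $q_\pm\in\mI(BD)$ but $(q_\pm)'\notin\mI(B^*D^*)$, Corollary~\ref{cor:4} does give the contradiction (bisectoriality of $BD$ at $q_\pm$ would force the missing coercivity of $B^*$). But in the remaining sub-case $q_\pm\notin\mI(BD)$ your inference ``by Proposition~\ref{prop:0}(2)(ii), $BD$ need not be closed, so bisectoriality must fail'' is not a proof: that item asserts coercivity $\Rightarrow$ closedness, not the converse, and ``need not be closed'' does not establish that $BD$ actually fails to be bisectorial at $q_\pm$. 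The paper does not address this sub-case either, so this is a shared lacuna rather than a divergence from the intended argument.
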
 

In applications, one tries to find an interval of $p$ for bisectoriality, which is the easiest property to check. 

The example that motivated the study of perturbed Dirac operators it the following setup, introduced in \cite{AMN} and exploited in \cite{AKM}  to reprove the Kato square root theorem obtained in \cite{AHLMT} for second order operators and in \cite{AHMT} for systems. 
Let $A\in L^{\infty}(\R^n;\mL(\C^m\otimes\C^n))$ satisfy
\begin{equation}%\label{eq:L2coer}
  \int_{\R^n}\nabla\bar{u}(x)\cdot A(x)\nabla u(x) dx\gtrsim\Norm{\nabla u}{2}^2,
\end{equation}
for all $u\in W^{1,2}(\R^n;\C^m)$. Then $BD$, with $\displaystyle B=\begin{pmatrix} I & 0 \\ 0 & A \end{pmatrix}$ and $\displaystyle D=\begin{pmatrix} 0 & -\divv \\ \nabla & 0 
\end{pmatrix}$, has a bounded $H^{\infty}$-calculus in $L^p(\R^n;\C^m\oplus[\C^m\otimes\C^n])$ for all $p\in(q_{-}(BD),q_{+}(BD))$, with angle at  most equal to the accretivity angle of $A$. 

Let us finish with the interpretation of the    kernel/range decomposition in this particular example. As 
$\displaystyle BD=\begin{pmatrix} 0 & -\divv \\ A\nabla & 0 
\end{pmatrix}$, we see that 
$$\nul_{p}(BD)=\{u=(0,g)\in L^p(\R^n;\C^m\oplus[\C^m\otimes\C^n])\, ;\, 
\divv g=0\}$$ and 
$$\clos{\ran_{p}(BD)}=\{u=(f,g)\in L^p(\R^n;\C^m\oplus[\C^m\otimes\C^n])\, ;\, 
 g=A\nabla h, h\in \dot W^{1,p}(\R^n;\C^m) \},$$
where $\dot W^{1,p}(\R^n;\C^m)$ is the homogeneous Sobolev space. Thus,
 \begin{equation}
L^p(\R^n;\C^m\oplus[\C^m\otimes\C^n])=\nul_{p}(BD)\oplus  \clos{\ran_{p}(BD)}
\end{equation}
  is equivalent to the \emph{Hodge splitting adapted to $A$ for vector fields}
\begin{equation}\label{eq:hodge1}
L^p(\R^n;\C^m\otimes\C^n) = \nul_{p}(\divv) \oplus A\nabla  \dot W^{1,p}(\R^n;\C^m).
\end{equation}
 Writing details for $DB$ instead we arrive the equivalence between
 \begin{equation}
L^p(\R^n;\C^m\oplus[\C^m\otimes\C^n])=\nul_{p}(DB)\oplus  \clos{\ran_{p}(DB)}
\end{equation}
and a second  \emph{Hodge splitting adapted to $A$ for vector fields}
 \begin{equation}\label{eq:hodge2}
L^p(\R^n;\C^m\otimes\C^n) = \nul_{p}(\divv A) \oplus \nabla  \dot W^{1,p}(\R^n;\C^m).
\end{equation}
As $q_{\pm}(BD)=q_{{\pm}}(DB)$, we obtain that \eqref{eq:hodge1} and \eqref{eq:hodge2} hold for $p\in (q_{-}(BD), q_{+}(BD))$ and fail at the endpoints. 

Let $L= -\divv A\nabla$. It was shown in \cite[Corollary 4.24]{Auscher} that \eqref{eq:hodge2} holds for $p\in (q_{+}(L^*)', q_{+}(L))$,  where the number  $q_{+}(L)$ is defined as the supremum of those $p>2$ for which $t^{1/2}\nabla e^{-tL}$ is uniformly bounded on $L^p$ for $t>0$ (Strictly speaking, this is done when $m=1$, and Section 7.2 in \cite{Auscher} gives an account of the extension to systems). As a consequence, we have shown that  $q_{+}(BD)=q_{{+}}(DB)=q_{+}(L)$ and $q_{-}(BD)=q_{{-}}(DB)=q_{+}(L^*)'$.

In the previous example, the matrix $B$ is block-diagonal. If $B$ is a full matrix, then $DB$ and $BD$ happen to be in relation with a second order system in $\R^{n+1}_{+}$ as first shown in \cite{AAMarkiv}. Their study brought new information to the boundary value problems associated to such systems when $p=2$. Details when $p\ne 2$  will appear in the forthcoming PhD thesis of the second author. 

\section{Acknowledgments} This work is part of the forthcoming PhD thesis of the second author. The first author thanks the organizers of the IWOTA 2012 conference in Sydney for a stimulating environment. Both authors were partially supported by  the ANR project ``Harmonic Analysis at its Boundaries``, ANR-12-BS01-0013-01.

\def\cprime{$'$}


\begin{thebibliography}{10}

\bibitem{Ajiev}
Sergey~S. Ajiev.
\newblock Extrapolation of the functional calculus of generalized {D}irac
  operators and related embedding and {L}ittlewood-{P}aley-type theorems. {I}.
\newblock {\em J. Aust. Math. Soc.}, 83(3):297--326, 2007.

\bibitem{Auscher}
Pascal Auscher.
\newblock On necessary and sufficient conditions for {$L\sp p$}-estimates of
  {R}iesz transforms associated to elliptic operators on {$\R\sp n$} and
  related estimates.
\newblock {\em Mem. Amer. Math. Soc.}, 186(871):xviii+75, 2007.

\bibitem{AAMarkiv}
Pascal Auscher, Andreas Axelsson, and Alan McIntosh.
\newblock Solvability of elliptic systems with square integrable boundary data.
\newblock {\em Ark. Mat. 48\/} (2010), 253--287.

\bibitem{AAM}
Pascal Auscher, Andreas Axelsson, and Alan McIntosh.
\newblock On a quadratic estimate related to the {Kato} conjecture and boundary
  value problems.
\newblock {\em Contemp. Math.}, 505:105--129, 2010.

\bibitem{AHLMT}
{Pascal Auscher, Steve Hofmann, Michael Lacey, Alan McIntosh, and Philippe Tchamitchian.}
\newblock The solution of the {K}ato square root problem for second order
  elliptic operators on {$\R^n$}.
\newblock {\em Ann. of Math.} (2) 156, 2 (2002), 633--654.

\bibitem{AHMT}
Pascal Auscher, Steve Hofmann, Alan McIntosh, and Philippe Tchamitchian.
\newblock The {K}ato square root problem for higher order elliptic operators
  and systems on {$\R^n$}.
\newblock {\em J. Evol. Equ.}, 1(4):361--385, 2001.
\newblock Dedicated to the memory of Tosio Kato.

\bibitem{AMN}
{Pascal Auscher, Alan McIntosh and Andrea Nahmod.}
\newblock The square root problem of {K}ato in one dimension, and first order
elliptic systems.
\newblock {\em Indiana Univ. Math. J.} 46, 3 (1997), 659--695.


\bibitem{AKM}
Andreas Axelsson, Stephen Keith, and Alan McIntosh.
\newblock Quadratic estimates and functional calculi of perturbed {D}irac
  operators.
\newblock {\em Invent. Math.}, 163(3):455--497, 2006.

%\bibitem{BluKu}
%S{\"o}nke Blunck and Peer~Christian Kunstmann.
%\newblock Calder\'on-{Z}ygmund theory for non-integral operators and the
%  {$H^\infty$} functional calculus.
%\newblock {\em Rev. Mat. Iberoamericana}, 19(3):919--942, 2003.
%
%\bibitem{DuoMc}
%Xuan~T. Duong and Alan McIntosh.
%\newblock Singular integral operators with non-smooth kernels on irregular
%  domains.
%\newblock {\em Rev. Mat. Iberoamericana}, 15(2):233--265, 1999.
%
%\bibitem{DuoRo}
%Xuan~T. Duong and Derek~W. Robinson.
%\newblock Semigroup kernels, {P}oisson bounds, and holomorphic functional
%  calculus.
%\newblock {\em J. Funct. Anal.}, 142(1):89--128, 1996.
%
%\bibitem{HofMa}
%Steve Hofmann and Jos{\'e}~Mar{\'{\i}}a Martell.
%\newblock {$L^p$} bounds for {R}iesz transforms and square roots associated to
%  second order elliptic operators.
%\newblock {\em Publ. Mat.}, 47(2):497--515, 2003.

\bibitem{HM}
Tuomas Hyt{\"o}nen and Alan McIntosh.
\newblock Stability in $p$ of the $H^{\infty}$-calculus of first-order systems in $L^p$.
\newblock {\em The AMSI-ANU Workshop on Spectral Theory and Harmonic Analysis}, 167--181, Proc. Centre Math. Appl. Austral. Nat. Univ., 44, Austral. Nat. Univ., Canberra, 2010.

%\bibitem{HytKe}
%Tuomas Hyt{\"o}nen and Mikko Kemppainen.
%\newblock On the relation of {Carleson's} embedding and the maximal inequality
%  in the context of {Banach} space geometry.
%\newblock Preprint, arXiv:1002.2876, 2010.

\bibitem{HMP1}
Tuomas Hyt{\"o}nen, Alan McIntosh, and Pierre Portal.
\newblock Kato's square root problem in {B}anach spaces.
\newblock {\em J. Funct. Anal.}, 254(3):675--726, 2008.

\bibitem{HMP2}
Tuomas Hyt\"onen, Alan McIntosh, and Pierre Portal.
\newblock Holomorphic functional calculus of {Hodge--Dirac} operators in
  {$L^p$}.
\newblock {\em J. Evol. Equ.}, 11 (2011), 71--105.

\bibitem{KalMit}
Nigel Kalton and Marius Mitrea.
\newblock Stability results on interpolation scales of quasi-{B}anach spaces
  and applications.
\newblock {\em Trans. Amer. Math. Soc.}, 350(10):3903--3922, 1998.

\bibitem {KKW} 
\newblock Nigel Kalton, Peer Kunstmann and Lutz Weis.
\newblock Perturbation and interpolation theorems for the $H^\infty$-calculus with applications to differential operators. 
\newblock{\em Math. Ann.} 336 (2006), no. 4, 747--801.

\bibitem{KW}
Nigel Kalton and Lutz Weis.
\newblock The $H^\infty$-calculus and sums of closed operators.
\newblock {\em Math. Ann. } 321 (2001),
no. 2, 319--345.

\bibitem{KunW}
Peer Kunstmann and Lutz Weis.
\newblock Maximal $L_p$-regularity for parabolic equations, Fourier multiplier theorems and $H^\infty$-functional calculus. 
\newblock {\em Functional analytic methods for evolution equations}, 65--311, Lecture Notes in Math., 1855, Springer, Berlin, 2004.
   

%\bibitem{Meyer}
%Yves Meyer.
%\newblock {\em Ondelettes et op\'erateurs. {II}}.
%\newblock Actualit\'es Math\'ematiques. [Current Mathematical Topics]. Hermann,
%  Paris, 1990.
%\newblock Op{\'e}rateurs de Calder{\'o}n-Zygmund. [Calder{\'o}n-Zygmund
%  operators].

\bibitem{Snei}
I.~Ja. {\v{S}}ne{\u\i}berg.
\newblock Spectral properties of linear operators in interpolation families of
  {B}anach spaces.
\newblock {\em Mat. Issled.}, 9(2(32)):214--229, 254--255, 1974.

\end{thebibliography}
\end{document}